\numberwithin{equation}{section}
\title[Moments of zeta and correlations of divisor-sums]{Moments of zeta and correlations of divisor-sums: stratification and Vandermonde integrals
}
\author{Siegfred Baluyot}
\address{American Institute of Mathematics \\
600 East Brokaw Road San Jose, CA 95112}
\email{\href{mailto:sbaluyot@aimath.org}{sbaluyot@aimath.org}}
\author{Brian Conrey}
\address{American Institute of Mathematics \\
600 East Brokaw Road San Jose, CA 95112}
\email{\href{mailto:conrey@aimath.org}{conrey@aimath.org}}
\subjclass[2010]{11M06}
\newcommand{\ordp}{\text{ord}_p}
\DeclareMathOperator{\re}{\text{\upshape{Re}}}
\newtheorem{theorem}{Theorem}[section]
\newtheorem{conjecture}[theorem]{Conjecture}
\newtheorem{lemma}[theorem]{Lemma}
\begin{document}

\begin{abstract}
We refine a recent heuristic developed by Keating and the second author. Our improvement leads to a new integral expression for the conjectured asymptotic formula for shifted moments of the Riemann zeta-function. This expression is analogous to a formula, recently discovered by Brad Rodgers and Kannan Soundararajan, for moments of characteristic polynomials of random matrices from the unitary group.
\end{abstract}

\maketitle

\section{Introduction}

One of the most important problems in analytic number theory is to find an asymptotic formula for the $2k$th moment
$$
M_k(T) := \int_0^T |\zeta(\tfrac{1}{2}+it)|^{2k}\,dt
$$
of the Riemann zeta-function $\zeta(s)$, where $k$ is an arbitrary positive integer. A folklore conjecture suggests that, for some unspecified constant $c_k$, we have $M_k(T)\sim c_k T(\log T)^{k^2}$ as $T\rightarrow \infty$. To date, this is known only for $k=1,2$, with $c_1=1$ and $c_2=1/(2\pi^2)$~\cite{T}. The problem is so intractable that, up until recently, there had been no viable guess for the exact value of $c_k$. The approach of using correlations of divisor sums leads to conjectures for $c_3$ and $c_4$~\cite{conreyghosh,conreygonek}, and the process has recently been examined in close detail and made more precise by Ng~\cite{ng}, Hamieh and Ng~\cite{hamiehng}, and Ng, Shen, and Wong~\cite{ngshenwong}. This approach seems to fail, however, to give a reasonable guess for $c_k$ when $k\geq 5$~\cite{conreygonek}.

A breakthrough was made when Keating and Snaith~\cite{keatingsnaith} used random matrix theory to predict the exact value of $c_k$ for all complex $k$ with $\re(k)\geq -1/2$. Remarkably, their predicted values of $c_3$ and $c_4$ agree with the conjectures in \cite{conreyghosh} and \cite{conreygonek}. The conjectures for $c_k$ for positive integers $k$ were then refined by Farmer, Keating, Rubinstein, Snaith, and the second author~\cite{cfkrs} through a heuristic method called the \textit{recipe}, which also applies to a general family of $L$-functions. At about the same time, Diaconu, Goldfeld, and Hoffstein~\cite{diaconugoldfeldhoffstein} predicted the same values of $c_k$ via a different approach using multiple Dirichlet series. Despite the differences between these approaches, all the conjectures agree.

The recipe arrives at the conjecture by using the approximate functional equation and then predicting that certain ``off-diagonal'' terms cancel in the evaluation of the moment. However, it does not indicate how these terms cancel or combine. In a recent series of papers~\cite{CK1,CK2,CK3,CK4,CK5}, Keating and the second author address this problem by revisiting the now conventional approach of using Dirichlet polynomial approximations to $\zeta^k(s)$ and examining correlations of divisor sums. This approach, which relies on the delta method of Duke, Friedlander, and Iwaniec~\cite{dfi}, was previously employed by Gonek and the second author~\cite{conreygonek} to conjecture the values of $c_3$ and $c_4$ from a number theoretic perspective. The principal result of \cite{CK5} is a new heuristic method that indicates how divisor sums may be combined to recover the prediction of the recipe. This new heuristic is inspired by ideas of Bogomolny and Keating~\cite{bogomolnykeating1,bogomolnykeating2}, and is reminiscent of the Hardy-Littlewood circle method.

In this paper, we refine the approach of \cite{CK5} by using an integral form of the asymptotic formula for correlations of divisor sums that is predicted by the delta method. Through this and a generalization of the local calculations in \cite{CK5}, we predict that combining the divisor sums in the same way as in \cite{CK5} leads to a certain ``Vandermonde integral'' expression (Conjecture~\ref{conj: vandermonde} below). Evaluating this Vandermonde integral then immediately gives the sum of the $\ell$-swap terms from the recipe prediction (Conjecture~\ref{conj: ell-swaps} below), and thus removes the need to examine all possible decompositions of $A$ and $B$ as in Section~12 of \cite{CK5}.

We are interested in the shifted moments
\begin{equation*}
\frac{1}{T}\int_T^{2T} \prod_{\alpha\in A }\zeta(\tfrac{1}{2}+\alpha +it) \prod_{\beta\in B }\zeta(\tfrac{1}{2}+\beta -it)\,dt,
\end{equation*}
where $T$ is a parameter tending to $\infty$ and $A$ and $B$ are finite multisets of complex numbers that have small moduli (say $\ll 1/\log T$). We study these moments by examining their Dirichlet polynomial approximations
\begin{equation}\label{eqn: dirichletpolynomialapprox}
\mathcal{M}_{A,B}(T,X) := \frac{1}{T}\int_0^{\infty} \psi\left( \frac{t}{T}\right) \sum_{m=1}^{\infty} \frac{\tau_A (m) }{m^{\frac{1}{2}+it}} \Upsilon\left( \frac{m}{X} \right)\sum_{n=1}^{\infty} \frac{\tau_B (n) }{n^{\frac{1}{2}-it}} \Upsilon\left( \frac{n}{X} \right) \,dt ,
\end{equation}
where $X$ is another parameter tending to $\infty$, $\psi$ is a smooth, nonnegative function that is supported on $[1,2]$, say, and $\Upsilon$ is a smooth, nonnegative function that is supported on $[0,1]$, say, and satisfies $\Upsilon(0)>0$. Here, the coefficients $\tau_A$ are defined for finite multisets $A$ by
\begin{equation}\label{taudef}
\sum_{m=1}^{\infty}\frac{\tau_A(m)}{m^s} =\prod_{\alpha\in A} \zeta(s+\alpha)
\end{equation}
for all $s$ such that the left-hand side converges, where the product on the right-hand side is over all $\alpha\in A$, counted with multiplicity. The recipe of \cite{cfkrs} leads to the prediction (see Section~\ref{sec: recipe})
\begin{align}
\mathcal{M}_{A,B} (T,X) \sim \sum_{\substack{U\subseteq A, V\subseteq B \\  |U|=|V| }} &  \frac{1}{(2\pi i )^2} \int_{(\varepsilon)} \int_{(\varepsilon)} \widetilde{\Upsilon}(\xi) \widetilde{\Upsilon}(\eta) X^{\xi + \eta}  \frac{1}{T}\int_0^{\infty} \psi\left(\frac{t}{T}\right)  \notag \\
& \times  \prod_{\alpha\in U}\chi(\tfrac{1}{2}+\xi+\alpha+it)  \prod_{\beta\in V}\chi (\tfrac{1}{2}+\eta+\beta-it) \notag \\
& \hspace{.25in} \times \sum_{n=1}^{\infty} \frac{\tau_{(A \smallsetminus U)_{\xi}\cup (V_{\eta})^- }(n) \tau_{ (B\smallsetminus V)_{\eta} \cup  (U_{\xi})^-}(n)}{ n}    \,d\xi\,d\eta, \label{recipe}
\end{align}
where $\varepsilon>0$ is an arbitrarily small constant, $\widetilde{\Upsilon}$ denotes the Mellin transform of $\Upsilon$, $\chi$ is the factor from the functional equation $\zeta(s)=\chi(s)\zeta(1-s)$, and the $n$-sum should be interpreted as its analytic continuation. Here, we use the notations $A_s:=\{\alpha+s : \alpha\in A \}$ and $A^{-}:=\{-\alpha : \alpha\in A\}$ for a multiset $A$ and a complex number $s$. Notice that if $|U|=|V|=\ell$, then we are exchanging $\ell$ elements from $A_{\xi}$ with $\ell$ elements from $(B_{\eta})^{-}$ in forming the sets $(A\smallsetminus U)_{\xi}\cup(V_{\eta})^-$ and $(B\smallsetminus V)_{\eta}\cup(U_{\xi})^-$. Thus, we may refer to the terms with $|U|=|V|=\ell$ in \eqref{recipe} as the ``$\ell$-swap terms.''

Our goal is to understand how and what properties of divisor sums might lead to \eqref{recipe}. We let $\ell$ be a fixed integer with $1\leq \ell\leq \min\{|A|,|B|\}$, and partition $A$ into $\ell$ nonempty sets $A=A_1\cup\cdots \cup A_{\ell}$ and similarly write $B=B_1\cup\cdots \cup B_{\ell}$. We may then express $\tau_A$ and $\tau_B$ as the Dirichlet convolutions $\tau_A=\tau_{A_1}*\cdots *\tau_{A_{\ell}}$ and $\tau_B=\tau_{B_1}*\cdots *\tau_{B_{\ell}}$, and deduce from \eqref{eqn: dirichletpolynomialapprox} that
\begin{align}
\mathcal{M}_{A,B}(T,X) = \frac{1}{T}\int_0^{\infty} \psi\left( \frac{t}{T}\right) \sum_{\substack{ 1 \leq m_1,\dots,m_{\ell} <\infty \\ 1\leq n_1,\dots,n_{\ell} <\infty}  }  \frac{\tau_{A_1} (m_1)\cdots \tau_{A_{\ell}} (m_{\ell}) \tau_{B_1} (n_1)\cdots \tau_{B_{\ell}} (n_{\ell}) }{(m_1 \cdots m_{\ell} )^{\frac{1}{2}+it} (n_1 \cdots n_{\ell})^{\frac{1}{2}-it}} \notag\\
\times \Upsilon \left(\frac{m_1\cdots m_{\ell}}{X}\right) \Upsilon \left(\frac{n_1\cdots n_{\ell}}{X}\right) \,dt. \label{eqn: dirichletpolynomialapprox2}
\end{align}
The basic idea behind the approach in \cite{CK5} is to sum over the $m_j,n_j$ with $m_j/n_j$ close to the ``rational direction'' $M_j/N_j$ and consider all possible directions subject to the natural conditions $(M_j,N_j)=1$ and $M_1\cdots M_{\ell}=N_1\cdots N_{\ell}$. A key step in the approach is to use the delta method of \cite{dfi} to evaluate each $m_j,n_j$-sum, and then combine the results. Our starting point is the sum
\begin{align}
\mathcal{S}_{\ell} : = \frac{1}{(\ell !)^2}
& \sum_{\substack{M_1\cdots M_{\ell} = N_1\cdots N_{\ell} \\ (M_j,N_j)=1 \ \forall j}} \sum_{ \substack{h_1,\dots,h_{\ell} \in \mathbb{Z} \\ h_1\cdots h_{\ell}\neq 0} } \frac{1}{T}\int_0^{\infty} \psi\left( \frac{t}{T}\right) \notag\\
& \times \sum_{\substack{ 1 \leq m_1,\dots,m_{\ell} <\infty \\ 1\leq n_1,\dots,n_{\ell} <\infty  \\ m_jN_j-n_jM_j = h_j \ \forall j }  }  \frac{\tau_{A_1} (m_1)\cdots \tau_{A_{\ell}} (m_{\ell}) \tau_{B_1} (n_1)\cdots \tau_{B_{\ell}} (n_{\ell}) }{(m_1 \cdots m_{\ell} )^{\frac{1}{2}+it} (n_1 \cdots n_{\ell})^{\frac{1}{2}-it}} \notag\\
& \hspace{.5in} \times \Upsilon \left(\frac{m_1\cdots m_{\ell}}{X}\right) \Upsilon \left(\frac{n_1\cdots n_{\ell}}{X}\right) \,dt. \label{Sldef}
\end{align}
We also define
\begin{align*}
\mathcal{S}_0: = \frac{1}{(2\pi i )^2} \int_{(\varepsilon)} \int_{(\varepsilon)} \widetilde{\Upsilon}(\xi) \widetilde{\Upsilon}(\eta) X^{\xi + \eta}  \frac{1}{T}\int_0^{\infty} \psi\left(\frac{t}{T}\right) \sum_{n=1}^{\infty} \frac{\tau_A(n) \tau_B(n)}{ n^{1+\xi+\eta}}    \,d\xi\,d\eta.
\end{align*}
We expect the following.
\begin{conjecture}\label{conj: stratification} If $\alpha,\beta \ll 1/\log T$ for each $\alpha\in A$ and $\beta\in B$, then as $T\rightarrow \infty$ we have
\begin{equation*}
\mathcal{M}_{A,B}(T,X) \sim \sum_{\ell=0}^{\min\{|A|,|B|\}} \mathcal{S}_{\ell}.
\end{equation*}
\end{conjecture}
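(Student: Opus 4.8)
Since Conjecture~\ref{conj: stratification} is a prediction rather than a theorem, the plan is to give a heuristic derivation in the spirit of \cite{CK5}: start from the exact identity \eqref{eqn: dirichletpolynomialapprox2} for a chosen decomposition of $A$ and $B$ into $\ell$ nonempty parts, and organize the sum over the $m_j,n_j$ according to the rational direction of each ratio $m_j/n_j$ by means of a smoothed circle method. The first point is that $\mathcal{S}_0$ is simply the exact diagonal: the contribution to \eqref{eqn: dirichletpolynomialapprox2} of the terms with $m_1\cdots m_\ell=n_1\cdots n_\ell$, for which the $t$-integral contributes only a constant, becomes precisely the double contour integral defining $\mathcal{S}_0$ once the two $\Upsilon$-factors are expanded by Mellin inversion and the resulting Dirichlet series is summed; this piece is the same for every $\ell$ and every decomposition. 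Observe also that when $\ell=1$ the constraint $M_1=N_1$ together with $(M_1,N_1)=1$ forces $M_1=N_1=1$, so that $\mathcal{S}_1$ is literally the off-diagonal $m\neq n$ of \eqref{eqn: dirichletpolynomialapprox} and $\mathcal{S}_0+\mathcal{S}_1$ is already identically $\mathcal{M}_{A,B}(T,X)$; the strata $\mathcal{S}_\ell$ with $\ell\ge2$ are the pieces into which this off-diagonal is to be broken up before the delta method can be brought to bear on it, and the real force of the conjecture is that this regrouping loses nothing.

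For the off-diagonal I would, following Bogomolny--Keating~\cite{bogomolnykeating1} and \cite{CK5}, detect in each coordinate $j$ the relation $m_jN_j-n_jM_j=h_j$ with $(M_j,N_j)=1$ by the delta method of \cite{dfi}, and then sum over all admissible direction vectors. Because the $t$-average forces $m_1\cdots m_\ell$ and $n_1\cdots n_\ell$ to agree to within a factor $1+O(T^{-1+\varepsilon})$, it forces $M_1\cdots M_\ell=N_1\cdots N_\ell$, which is exactly the condition in \eqref{Sldef}; and the exact diagonal in coordinate $j$ is the case $h_j=0$. Summing over all coprime direction vectors obeying the product relation, over all shift vectors with $h_1\cdots h_\ell\neq0$, and over the $m_j,n_j$ lying on the corresponding linear forms, and dividing by the symmetrization factor $(\ell!)^2$, reassembles $\mathcal{S}_\ell$; the remaining ranges, in which some but not all of the $h_j$ vanish, should be absorbed into the strata of smaller index by merging the coordinates that are exactly diagonal, iterating down to $\ell=0$. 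Once this regrouping is carried out, summing over $\ell$ from $0$ to $\min\{|A|,|B|\}$ should exhaust $\mathcal{M}_{A,B}(T,X)$; and as a consistency check, evaluating each $\mathcal{S}_\ell$ by the delta method --- the ``Vandermonde integral'' of the following sections --- and summing should return the recipe prediction \eqref{recipe}.

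The difficulty is of three kinds. First, the inner sums over the $m_j,n_j$ in \eqref{Sldef} are shifted convolutions of the generalized divisor functions $\tau_{A_j}$ and $\tau_{B_j}$, and these lie beyond current technology as soon as one of the parts has more than one element; the entire scheme therefore rests on the heuristic that the delta method produces the expected main term, which is precisely why the statement is a conjecture. Second, one must justify interchanging the infinite sums over directions and shift vectors with the $m_j,n_j$-sums and the $t$-integral, and control the ranges in which $|h_j|$ or $\max(M_j,N_j)$ is large; the delta-method main terms ought to decay fast enough for this to be legitimate, but the error analysis is delicate. Third --- and this is where essentially all of the work lies --- one must show that the strata are genuinely non-overlapping and jointly exhaustive in the limit: a single tuple $(m_1,\dots,m_\ell,n_1,\dots,n_\ell)$ is simultaneously ``close'' to many direction vectors, and the claim that the associated main terms recombine, with neither redundancy nor omission, into the clean sum $\sum_{\ell}\mathcal{S}_\ell$ is exactly the bookkeeping that \cite{CK5} carried out through an intricate examination of all decompositions of $A$ and $B$, and that the present approach aims to replace by the Vandermonde-integral identity.
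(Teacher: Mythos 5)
The paper makes no attempt at the direct ``exhaustion'' argument that you put at the center of your plan. Its entire case for Conjecture~\ref{conj: stratification} is indirect: Section~\ref{sec: recipe} derives the recipe prediction \eqref{recipe} for $\mathcal{M}_{A,B}(T,X)$, and Sections 3--5 argue (via the delta-method prediction \eqref{additivedivisorhypothesis}, an independence hypothesis, and the rigorous local computation of Theorem~\ref{euler}) that each $\mathcal{S}_{\ell}$ is asymptotic to the sum of the $\ell$-swap terms of \eqref{recipe} (Conjectures~\ref{conj: vandermonde} and \ref{conj: ell-swaps}); summing over $\ell$ then reproduces the full recipe prediction, and this agreement is the stated reason to believe Conjecture~\ref{conj: stratification}. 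What you relegate to a ``consistency check'' in your second paragraph is in fact the whole argument; the direct decomposition you propose is exactly what the paper (and \cite{CK5}) does not know how to carry out.

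Moreover, your central bookkeeping claim contains a genuine inconsistency. You are right that, read literally, $\mathcal{S}_0$ is the diagonal and the constraints for $\ell=1$ force $M_1=N_1=1$, so that $\mathcal{S}_0+\mathcal{S}_1$ is identically $\mathcal{M}_{A,B}(T,X)$; but then treating $\mathcal{S}_2,\mathcal{S}_3,\dots$ as further ``pieces'' of the same off-diagonal would make $\sum_{\ell}\mathcal{S}_{\ell}$ overcount outright. In fact, for $\ell\geq 2$ the expression \eqref{Sldef} read literally diverges: the summand depends only on $(m_j,n_j,t)$ and not on $(M_j,N_j,h_j)$, and a fixed tuple in the support of the cutoffs satisfies $m_jN_j-n_jM_j=h_j\neq 0$ for infinitely many admissible direction vectors, so it is counted infinitely often; the factor $1/(\ell!)^2$ compensates only the permutation symmetry among the coordinates, not this overcounting. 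Hence the strata are not, and cannot be made, ``genuinely non-overlapping and jointly exhaustive'' term by term, and your scheme of absorbing the ranges with some $h_j=0$ into lower strata has no footing in the definition (those terms are simply excluded by $h_1\cdots h_{\ell}\neq 0$). The conjecture only acquires content once each $\mathcal{S}_{\ell}$ is interpreted through the delta-method evaluation plus the independence hypothesis, under which $\mathcal{S}_1$ is no longer the full off-diagonal asymptotic but only the $1$-swap terms; the assertion that the losses of the term-by-term delta method in the low strata are exactly replenished by the (hugely overcounted) higher strata is precisely what is being conjectured, and the paper's evidence for it is the match with the recipe rather than any partition of the sum of the kind you describe.
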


One way to view this paper is that it gives evidence for this conjecture, as we predict (in Conjecture~\ref{conj: ell-swaps} below) that $\mathcal{S}_{\ell}$ is essentially the sum of the $\ell$-swap terms from \eqref{recipe}. Towards this, we use an integral form of the asymptotic formula for correlations of divisor sums that is predicted by the delta method (see \eqref{additivedivisorhypothesis} below) and rigorous evaluations of the local factors of an Euler product (Theorem~\ref{euler} below) to predict the following ``Vandermonde integral'' expression for $\mathcal{S}_{\ell}$.

\begin{conjecture}\label{conj: vandermonde} Let $\ell$ be an integer with $1\leq \ell \leq \min\{|A|,|B|\}$. If $\alpha,\beta \ll 1/\log T$ for each $\alpha\in A$ and $\beta\in B$, then as $T\rightarrow \infty$ we have
\begin{align}
\mathcal{S}_{\ell} \sim & \frac{1}{(\ell!)^2 (2\pi i )^2} \int_{(2\varepsilon)} \int_{(2\varepsilon)} \widetilde{\Upsilon}(\xi) \widetilde{\Upsilon}(\eta) \frac{X^{\xi + \eta}}{T} \int_0^{\infty} \psi\left( \frac{t}{T}\right) \frac{1}{(2\pi i)^{2\ell}} \oint_{|z_1|=\varepsilon} \cdots \oint_{|z_{\ell}|=\varepsilon}   \notag\\
& \hspace{.2in} \times \oint_{|w_1|=\varepsilon} \cdots \oint_{|w_{\ell}|=\varepsilon} \prod_{j=1}^{\ell} \Big\{  \chi (\tfrac{1}{2} +\xi-z_j+it) \chi (\tfrac{1}{2} +\eta-w_j-it)\Big\}\notag\\
& \hspace{.4in} \times \prod_{\substack{\alpha\in A \\ \beta\in B}} \zeta(1+\alpha+\beta+\xi+\eta)\prod_{\substack{1\leq j\leq \ell \\ \alpha\in A}}\zeta(1+\alpha+z_j) \prod_{\substack{1\leq j\leq \ell \\ \beta\in B}}\zeta(1+\beta+w_j) \notag\\
& \hspace{.6in} \times \prod_{\substack{1\leq j\leq \ell \\ \alpha\in A}} (1/\zeta)(1+ \alpha+\xi+\eta-w_j)\prod_{\substack{1\leq j\leq \ell \\ \beta\in B}} (1/\zeta) (1 +\beta+\xi+\eta-z_j)  \notag\\
& \hspace{.8in} \times \prod_{\substack{1\leq i,j\leq \ell \\ i\neq j}} (1/\zeta) (1-z_i+z_j) \prod_{\substack{1\leq i,j\leq \ell \\ i\neq j}} (1/\zeta) (1-w_i+w_j)  \notag\\
& \hspace{1in} \times \prod_{\substack{1\leq i,j\leq \ell}}\zeta(1 +z_i+w_j-\xi-\eta)\zeta(1 -z_i-w_j+\xi+\eta) \notag\\
& \hspace{1.2in} \times \mathcal{A}(A,B,Z,W,\xi+\eta) \ dw_{\ell}\cdots dw_1 \,dz_{\ell}\cdots dz_1\,d\xi\,d\eta, \label{eqn: vandermonde}
\end{align}
where $Z:=\{z_1,\dots,z_{\ell}\}$, $W:=\{w_1,\dots,w_{\ell}\}$, and $\mathcal{A}(A,B,Z,W,\xi+\eta)$ is an Euler product that converges absolutely whenever $\,\re(\xi)=\re(\eta)=2\varepsilon$ and $\,|\re(\gamma)|\leq \varepsilon$ for all $\gamma\in A\cup B\cup Z\cup W$. Explicitly, $\mathcal{A}$ is defined by \eqref{mathcalAdef} below.
\end{conjecture}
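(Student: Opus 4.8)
The plan is to start from the expression \eqref{Sldef} for $\mathcal{S}_{\ell}$ and process the inner sum over $m_1,\dots,m_{\ell},n_1,\dots,n_{\ell}$ subject to $m_jN_j-n_jM_j=h_j$ by invoking the delta-method prediction for correlations of divisor sums. First I would fix the moduli $M_j,N_j$ and, for each $j$, apply the conjectural asymptotic formula for $\sum_{m_jN_j-n_jM_j=h_j}\tau_{A_j}(m_j)\tau_{B_j}(n_j)(\cdots)$ in the integral form \eqref{additivedivisorhypothesis}; this replaces each divisor-correlation sum by a contour integral whose integrand is built from the relevant zeta-factors (a ``main term'' polynomial in logarithms written as a Cauchy integral over a small circle, one new variable $z_j$ for the $m_j$-side and $w_j$ for the $n_j$-side). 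After substituting these $\ell$ integral expressions, I would carry out the sum over $h_1,\dots,h_{\ell}$ and then over the directions $M_1\cdots M_{\ell}=N_1\cdots N_{\ell}$, $(M_j,N_j)=1$: this is where the $t$-integral produces the $\chi$-factors $\chi(\tfrac12+\xi-z_j+it)\chi(\tfrac12+\eta-w_j-it)$ via the usual stationary-phase/functional-equation heuristic, exactly as in \cite{CK5}, and where the arithmetic sums over $M_j,N_j$ assemble into an Euler product.

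The central computation is then the local (Euler-factor) analysis: for each prime $p$ one must sum the local contributions over $\ord_p(M_j),\ord_p(N_j)$ with the coprimality and product constraints, together with the local pieces of the divisor correlations, and show that the resulting $p$-factor equals the product of the displayed zeta-factors — $\prod_{\alpha,\beta}\zeta(1+\alpha+\beta+\xi+\eta)$, the $\prod\zeta(1+\alpha+z_j)$ and $\prod\zeta(1+\beta+w_j)$, the $(1/\zeta)$ factors in $\xi+\eta-w_j$ and $\xi+\eta-z_j$, the Vandermonde-type $(1/\zeta)(1-z_i+z_j)$ and $(1/\zeta)(1-w_i+w_j)$, and the $\zeta(1\pm(z_i+w_j-\xi-\eta))$ — times a correction Euler product $\mathcal{A}(A,B,Z,W,\xi+\eta)$ that converges absolutely in the stated region. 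This is precisely the content of Theorem~\ref{euler} (the ``rigorous evaluations of the local factors''), applied prime-by-prime; I would invoke it to extract the polar part and define $\mathcal{A}$ by \eqref{mathcalAdef} as the ratio of the true Euler product to the product of these zeta-factors, then check absolute convergence by expanding each local factor and verifying the leading terms cancel so that $\mathcal{A}_p=1+O(p^{-1-\delta})$ when $\re(\xi)=\re(\eta)=2\varepsilon$ and $|\re(\gamma)|\le\varepsilon$ for $\gamma\in A\cup B\cup Z\cup W$.

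Finally I would assemble the pieces: the outer $\xi,\eta$-integrals (with $\widetilde{\Upsilon}(\xi)\widetilde{\Upsilon}(\eta)X^{\xi+\eta}$ coming from the smoothing $\Upsilon(m/X),\Upsilon(n/X)$ via Mellin inversion), the $t$-integral with $\psi(t/T)$ and the $2\ell$ many $\chi$-factors, the $z_j,w_j$-contour integrals from the divisor-correlation main terms, and the arithmetic factor, arriving at \eqref{eqn: vandermonde}; the factor $1/(\ell!)^2$ and the shift of the $\xi,\eta$-contours from $(\varepsilon)$ to $(2\varepsilon)$ are bookkeeping to keep all of $\mathcal{A}$'s variables in its region of absolute convergence while the $z_j,w_j$ stay on circles of radius $\varepsilon$.

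I expect the main obstacle to be the combinatorics of the local computation in Theorem~\ref{euler}: organizing the sum over the $p$-adic valuations of $M_j,N_j$ under the simultaneous constraints $\sum_j\ord_p M_j=\sum_j\ord_p N_j$ and $\min(\ord_p M_j,\ord_p N_j)=0$, reconciling it with the local factors of the $\ell$ separate additive-divisor correlations, and recognizing the outcome as the indicated product of zeta-values — in particular the emergence of the Vandermonde factors $\prod_{i\ne j}(1/\zeta)(1-z_i+z_j)$ from the interaction of the different ``directions'' $M_j/N_j$ is the delicate point, generalizing the local calculations of Section~12 of \cite{CK5} from the case of a single pair of zeta factors to arbitrary $|A|,|B|$.
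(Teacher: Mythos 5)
Your proposal follows essentially the same route as the paper: apply the integral form \eqref{additivedivisorhypothesis} of the delta-method prediction to each of the $\ell$ correlations in \eqref{Sldef}, sum over the $h_j$ and the directions $M_1\cdots M_\ell=N_1\cdots N_\ell$, evaluate the resulting local sums prime-by-prime via Theorem~\ref{euler}, and then peel off the displayed zeta-factors so that the remainder $\mathcal{A}$ defined by \eqref{mathcalAdef} is an absolutely convergent Euler product. The only material difference is that where you invoke the CK5-style stationary-phase heuristic to produce the $\chi$-factors, the paper evaluates the relevant $y_j$-integrals exactly through a Beta-function identity (Lemma~\ref{betalemma}), which is precisely its stated refinement, but this leads to the same factors and does not change the structure of the argument.
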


After discovering this Vandermonde integral expression through a rough ``back-of-the-envelope'' calculation, the second author informed Brad Rodgers of it in the summer of 2019. Rodgers responded that he and Kannan Soundararajan had previously found an analogous expression for moments of characteristic polynomials of random matrices. They had proved that if $U(N)$ is the group of $N\times N$ unitary matrices, then integrating with respect to the Haar measure gives~\cite{rodgers}
\begin{align*}
\int_{U(N)} \prod_{\alpha\in A} \det\big( 1-e^{-\alpha}g\big) \prod_{\beta\in B}\det \big( 1-e^{-\beta}g^{-1}\big) \,dg = \sum_{\ell=0}^{\min\{|A|,|B|\}} J_{\ell}^{A,B},
\end{align*}
where, for positive integers $\ell$, $J_{\ell}^{A,B}$ is defined by
\begin{align*}
J_{\ell}^{A,B} :=
& \frac{(-1)^{\ell}}{(\ell!)^2(2\pi i)^{2\ell+1}} \mathop{\oint}_{|\xi|=1} \mathop{\oint \cdots\oint}_{\substack{|z_1|=\cdots=|z_{\ell}|=\varepsilon\\ |w_1|=\cdots=|w_{\ell}|= \varepsilon}} \frac{e^{-(N+\ell)\sum_{i=1}^{\ell} (z_i+w_i)}}{1-e^{-\xi} } \\
&  \hspace{.25in} \times \frac{\mathcal{Z}(A,B)\mathcal{Z}(A,Z^{-})\mathcal{Z}(B,W^{-})     }{   \mathcal{Z}(A,W_{\xi})     \mathcal{Z}(B,Z_{\xi})  } \tilde{\Delta}_{\ell}(Z)\tilde{\Delta}_{\ell}(W) \mathcal{Z}(W,Z_{\xi})^2\\
&  \hspace{.5in} \times  dw_{\ell}\cdots dw_1 \,dz_{\ell}\cdots dz_1\,d\xi,
\end{align*}
and $J_0^{A,B}$ is defined by $J_0^{A,B}:=\mathcal{Z}(A,B)$, with
$$
\mathcal{Z}(C,D):= \prod_{\substack{\gamma\in C\\ \delta\in D}}\frac{1}{1-e^{-\gamma-\delta}}
$$
and
$$
\tilde{\Delta}_{\ell}(C):=\prod_{\substack{\gamma,\hat{\gamma}\in C \\ \gamma\neq \hat{\gamma}}} (1-e^{\gamma-\hat{\gamma}})
$$
for finite multisets $C,D$ of complex numbers. It is quite remarkable that these two analogous Vandermonde integral expressions were discovered independently at about the same time through different approaches. In hindsight, it is straightforward to show that the sum of the $\ell$-swap terms from \eqref{recipe} equals the right-hand side of \eqref{eqn: vandermonde} once one has already seen the right-hand side of \eqref{eqn: vandermonde} and knows what to aim for. We do this calculation at the end of Section~\ref{sec: vandermondeintegral} and arrive at the following prediction.
\begin{conjecture}\label{conj: ell-swaps} Let $\ell$ be an integer with $1\leq \ell \leq \min\{|A|,|B|\}$. If $\alpha,\beta \ll 1/\log T$ for each $\alpha\in A$ and $\beta\in B$, then as $T\rightarrow \infty$ we have
\begin{align}
\mathcal{S}_{\ell} \sim
& \sum_{\substack{ U\subseteq A, V\subseteq B\\ |U|=|V|={\ell} }}\frac{1}{(2\pi i )^2} \int_{(\varepsilon)} \int_{(\varepsilon)} \widetilde{\Upsilon}(\xi) \widetilde{\Upsilon}(\eta) \frac{X^{\xi + \eta}}{T} \int_0^{\infty} \psi\left( \frac{t}{T}\right)  \notag\\
& \hspace{.25in} \times \prod_{\alpha\in U}   \chi (\tfrac{1}{2} +\xi+\alpha+it) \prod_{\beta\in V}\chi (\tfrac{1}{2} +\eta+\beta-it)  \notag\\
& \hspace{.5in}  \times \sum_{n=1}^{\infty} \frac{\tau_{(A \smallsetminus U)_{\xi}\cup (V_{\eta})^- }(n) \tau_{ (B\smallsetminus V)_{\eta} \cup  (U_{\xi})^-}(n)}{ n} \,d\xi\,d\eta. \notag
\end{align} 
\end{conjecture}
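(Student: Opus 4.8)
The plan is to derive Conjecture~\ref{conj: ell-swaps} from Conjecture~\ref{conj: vandermonde} by evaluating, through the residue theorem, the inner $2\ell$-fold contour integral over $z_1,\dots,z_\ell,w_1,\dots,w_\ell$ on the right-hand side of \eqref{eqn: vandermonde}. Write $C:=(A\smallsetminus U)_{\xi}\cup(V_{\eta})^-$ and $D:=(B\smallsetminus V)_{\eta}\cup(U_{\xi})^-$. The right-hand sides of \eqref{eqn: vandermonde} and of Conjecture~\ref{conj: ell-swaps} share the common outer layer consisting of the weights $\widetilde{\Upsilon}(\xi)\widetilde{\Upsilon}(\eta)X^{\xi+\eta}$, the smoothed $t$-integral $\frac1T\int_0^\infty\psi(t/T)\,(\cdots)\,dt$, and the integrations $\frac1{(2\pi i)^2}\int\int d\xi\,d\eta$ (the latter over $(2\varepsilon)$ in \eqref{eqn: vandermonde} and over $(\varepsilon)$ in Conjecture~\ref{conj: ell-swaps} --- I address this shift at the end). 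So it suffices to prove that, for each fixed $\xi,\eta$ with $\re(\xi)=\re(\eta)=2\varepsilon$ and each fixed $t$ in the support of $\psi(t/T)$, the $z,w$-integral in \eqref{eqn: vandermonde} equals
\[
\sum_{\substack{U\subseteq A,\ V\subseteq B\\ |U|=|V|=\ell}}\ \prod_{\alpha\in U}\chi(\tfrac{1}{2}+\xi+\alpha+it)\prod_{\beta\in V}\chi(\tfrac{1}{2}+\eta+\beta-it)\sum_{n=1}^{\infty}\frac{\tau_{C}(n)\tau_{D}(n)}{n}.
\]

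First I would locate the poles of the $z,w$-integrand inside the polydisc $|z_1|=\cdots=|w_\ell|=\varepsilon$. The $\chi$-factors are holomorphic at arguments near $\tfrac{1}{2}+it$ for the large $t$ that occur here, every $1/\zeta$-factor has argument close to $1$, where $1/\zeta$ is holomorphic (the zeros of $\zeta$ being bounded away), and $\mathcal{A}(A,B,Z,W,\xi+\eta)$ is holomorphic on the relevant region by the hypotheses of Conjecture~\ref{conj: vandermonde}. Hence the only poles of the integrand in the variable $z_j$ inside $|z_j|=\varepsilon$ are the simple poles at $z_j=-\alpha$ ($\alpha\in A$) coming from the single factor $\zeta(1+\alpha+z_j)$ in $\prod_{1\le j\le\ell}\prod_{\alpha\in A}\zeta(1+\alpha+z_j)$, and likewise the only poles in $w_j$ are at $w_j=-\beta$ ($\beta\in B$); moreover taking a residue in one variable introduces no new pole of the remaining variables inside their contours, so the iterated residues are unambiguous. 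Collapsing each contour onto its enclosed residues thus writes the $z,w$-integral as a finite sum over choices $\alpha_j\in A$, $\beta_j\in B$ for $1\le j\le\ell$. The Vandermonde-type factors $\prod_{i\ne j}(1/\zeta)(1-z_i+z_j)$ and $\prod_{i\ne j}(1/\zeta)(1-w_i+w_j)$ vanish whenever two of the $z_j$, respectively two of the $w_j$, coincide (because $1/\zeta$ vanishes at $1$), so only the terms with the $\alpha_j$ pairwise distinct and the $\beta_j$ pairwise distinct survive; these are indexed by subsets $U=\{\alpha_1,\dots,\alpha_\ell\}\subseteq A$ and $V=\{\beta_1,\dots,\beta_\ell\}\subseteq B$ of size $\ell$. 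Since the integrand is invariant under permutations of $(z_1,\dots,z_\ell)$ and, separately, of $(w_1,\dots,w_\ell)$, each pair $(U,V)$ is produced by $(\ell!)^2$ equal residues, which cancels the prefactor $1/(\ell!)^2$; the degenerate configurations in which distinct elements of $A$ or $B$ happen to coincide are handled by analytic continuation in the shift parameters.

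The substantive step is then to evaluate, for one fixed ordering of $U$ and $V$, the residue at $z_j=-\alpha_j$, $w_j=-\beta_j$ ($1\le j\le\ell$) and identify it with the corresponding swap term. Each relevant pole being simple with residue $1$, this residue is obtained by deleting the factors $\zeta(1+\alpha_j+z_j)$ and $\zeta(1+\beta_j+w_j)$ and substituting $z_j=-\alpha_j$, $w_j=-\beta_j$ into all the remaining factors. The $\chi$-factors become $\prod_{\alpha\in U}\chi(\tfrac{1}{2}+\xi+\alpha+it)\prod_{\beta\in V}\chi(\tfrac{1}{2}+\eta+\beta-it)$, which matches the swap term exactly; so it remains to verify that the product of the surviving $\zeta$-factors, the surviving $1/\zeta$-factors, and $\mathcal{A}(A,B,Z,W,\xi+\eta)$ specialized at $z_j=-\alpha_j$, $w_j=-\beta_j$ reassembles into $\sum_{n}\tau_C(n)\tau_D(n)/n$. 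Writing this Dirichlet series as an Euler product and factoring off $\prod_{\gamma\in C,\,\delta\in D}\zeta(1+\gamma+\delta)$ to leave an arithmetic factor that converges in a wider region, one matches those zeta-factors against the surviving global zeta-factors of \eqref{eqn: vandermonde} --- here $\prod_{\alpha\in A,\beta\in B}\zeta(1+\alpha+\beta+\xi+\eta)$, the residues of $\prod\zeta(1+\alpha+z_j)$ and of $\prod\zeta(1+\beta+w_j)$, and $\prod_{i,j}\zeta(1+z_i+w_j-\xi-\eta)\zeta(1-z_i-w_j+\xi+\eta)$ all contribute --- and identifies what is left with the specialization of $\mathcal{A}$, using its explicit definition \eqref{mathcalAdef} together with the rigorous local Euler-factor computations of Theorem~\ref{euler}. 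At last, having established the identity with $\re(\xi)=\re(\eta)=2\varepsilon$, one notes that both sides are meromorphic in $\xi$ and $\eta$ and that the strip $\varepsilon\le\re(\xi),\re(\eta)\le2\varepsilon$ contains no poles (the pole of $\widetilde{\Upsilon}$ at the origin and the zeta-poles near $\xi+\eta=0$ lie outside it), so the $\xi,\eta$-contours may be shifted to $\re(\xi)=\re(\eta)=\varepsilon$, producing exactly the statement of Conjecture~\ref{conj: ell-swaps}. I expect this final matching to be the main obstacle: it is not deep, but it is a substantial bookkeeping task, since one must track every family of $\zeta$ and $1/\zeta$ factors in \eqref{eqn: vandermonde} through the residue specialization and confirm all the cancellations against the explicit shape of $\mathcal{A}$.
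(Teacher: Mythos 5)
Your proposal follows essentially the same route as the paper: evaluate the $z_j,w_j$-integrals in Conjecture~\ref{conj: vandermonde} by residues at $z_j=-\alpha$, $w_j=-\beta$, use the factors $(1/\zeta)(1-z_i+z_j)$ and $(1/\zeta)(1-w_i+w_j)$ to discard repeated indices, cancel the $(\ell!)^2$ against the symmetry in the ordering of $U$ and $V$, and identify each residue (the $\chi$-factors together with the surviving zeta-factors and the specialized $\mathcal{A}$, as in \eqref{residue} and \eqref{vandermondeintegrand2}) with the Dirichlet series $\sum_n \tau_{(A\smallsetminus U)_{\xi}\cup (V_{\eta})^-}(n)\tau_{(B\smallsetminus V)_{\eta}\cup (U_{\xi})^-}(n)/n$, the paper completing the last identification via Lemma~\ref{IABlemma}(i). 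The bookkeeping you defer is exactly the computation the paper carries out, so the argument is correct and not materially different.
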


Note that the right-hand side is exactly the sum of the $\ell$-swap terms from \eqref{recipe}. Thus, the recipe prediction \eqref{recipe} and Conjecture~\ref{conj: ell-swaps} lead us to believe Conjecture~\ref{conj: stratification}.

In an AIM Workshop in 2016, Trevor Wooley suggested that the heuristic developed by Keating and the second author~\cite{CK2,CK4,CK5} has an interpretation in terms of the counting of rational points in algebraic varieties that is the subject of Manin's arithmetic stratification conjectures~\cite{browning,fmt,lehmanntanimoto}. Thus, we suspect that the sums \eqref{Sldef} and Conjecture~\ref{conj: stratification} present a stratification of $\mathcal{M}_{A,B}(T,X)$ that has the same interpretation. We may think of the problem of evaluating \eqref{eqn: dirichletpolynomialapprox2} as involving counting solutions (weighted by divisor functions) in the variety
\begin{equation*}
m_1\cdots m_{\ell} -n_1\cdots n_{\ell} =h, \hspace{.25in} |h|\leq H
\end{equation*}
for some parameter $H$. In making the definition \eqref{Sldef}, we are essentially stratifying this variety into the subvarieties
\begin{align*}
m_1N_1 - n_1M_1 & = h_1\\
m_2N_2 - n_2M_2 & = h_2 \\
& \vdots \\
m_{\ell}N_{\ell} - n_{\ell}M_{\ell} & = h_{\ell}
\end{align*}
with $|h_1h_2\cdots h_{\ell}|\leq H$. Thus, as suggested by Wooley, our approach is analogous to counting rational points on high dimensional varieties by stratification and counting points on subvarieties. Note that this stratification introduces some overcounting of solutions. However, we believe that the factor $1/(\ell!)^{2}$ in the definition \eqref{Sldef} accounts for this overcounting. This factor may be explained intuitively by an argument similar to the one in Section~12.2 of \cite{CK5}.

As mentioned earlier, we improve the method in \cite{CK5} by using the integral form \eqref{additivedivisorhypothesis} of the prediction of the delta method. A key observation in our refinement is that the functions $G_E(s,q)$, defined by \eqref{bigGEdef}, that appear in the prediction of the delta method are closely related to the coefficients $I_{C,D}(m)$ in the Dirichlet series expansion
\begin{equation}\label{IABdef}
\frac{\prod_{\gamma\in C}\zeta( s+\gamma)}{ \prod_{\delta\in D}\zeta( s+\delta) } = \sum_{m=1}^{\infty} \frac{I_{C,D}(m)}{m^s} \hspace{.25in} (\re(s)\rightarrow\infty)
\end{equation}
(see Lemma~\ref{GtoI} below). Thus, we are able to adapt the local calculations in \cite{CK5} without much difficulty, as the coefficients $I_{C,D}(m)$ exhibit properties similar to those of the coefficients $\tau_E$ defined by \eqref{taudef}. Another way we refine the approach of \cite{CK5} is in making some of their technical arguments more precise. This includes expressing Fourier transforms of test functions in terms of the gamma function (Lemma~\ref{betalemma}), as has been done in \cite{hamiehng}, \cite{hughesyoung},  and \cite{ng}.

In future work, we aim to adapt the method to other families of $L$-functions and make parts of it more rigorous.

\

\noindent{\bf Acknowledgments.}\, We would like to thank Brad Rodgers for useful comments that improved our exposition. The first author is supported by NSF DMS-1854398 FRG, and the second author is partially supported by a grant from the NSF.

\section{Descending through the recipe}\label{sec: recipe}
We first review the prediction of the CFKRS recipe. We apply Mellin inversion to deduce from the definition \eqref{eqn: dirichletpolynomialapprox} that
\begin{align*}
\mathcal{M}_{A,B}(T,X) = 
& \frac{1}{(2\pi i )^2} \int_{(2)} \int_{(2)} \widetilde{\Upsilon}(\xi) \widetilde{\Upsilon}(\eta) X^{\xi + \eta} \frac{1}{T}\int_0^{\infty} \psi\left( \frac{t}{T}\right)\\
& \times \prod_{\alpha\in A} \zeta(\tfrac{1}{2} +it+\xi +\alpha) \prod_{\beta\in B} \zeta(\tfrac{1}{2} -it+\eta +\beta) \,dt \,d\eta\,d\xi, 
\end{align*}
where $\widetilde{\Upsilon}$ is the Mellin transform of $\Upsilon$, which is defined by
\begin{equation*}
\widetilde{\Upsilon} (s) : = \int_0^\infty \Upsilon(x) x^{s-1}\,dx.
\end{equation*}
We move the lines of integration to $\re(\xi)=\varepsilon$ and $\re(\eta)=\varepsilon$. There are residues from the poles at $\xi=\frac{1}{2}-\alpha-it$, $\alpha\in A$ and $\eta=\frac{1}{2}-\beta+it$, $\beta\in B$. These residues are negligible due to the rapid decay of the Mellin transform $\widetilde{\Upsilon}$ and the fact that $t\asymp T$ since $\psi$ is supported on $[1,2]$. We thus arrive at
\begin{align*}
\mathcal{M}_{A,B}(T,X) = 
& \frac{1}{(2\pi i )^2} \int_{(\varepsilon)} \int_{(\varepsilon)} \widetilde{\Upsilon}(\xi) \widetilde{\Upsilon}(\eta) X^{\xi + \eta} \frac{1}{T}\int_0^{\infty} \psi\left( \frac{t}{T}\right)\\
& \times \prod_{\alpha\in A} \zeta(\tfrac{1}{2} +it+\xi +\alpha) \prod_{\beta\in B} \zeta(\tfrac{1}{2} -it+\eta +\beta) \,dt \,d\eta\,d\xi  \ + \ O_C(T^{-C}), 
\end{align*}
where $C>0$ is arbitrarily large. Applying the recipe in \cite{cfkrs} to the $t$-integral, we conjecture~\eqref{recipe}.

\section{Ascending through convolution sums: applying the delta method}

We apply Mellin inversion and interchange the order of summation to deduce from the definition \eqref{Sldef} of $\mathcal{S}_{\ell}$ that
\begin{align*}
&  \mathcal{S}_{\ell}  = \frac{1}{(\ell !)^2} \sum_{\substack{M_1\cdots M_{\ell} = N_1\cdots N_{\ell} \\ (M_j,N_j)=1 \ \forall j}} \sum_{ \substack{h_1,\dots,h_{\ell} \in \mathbb{Z} \\ h_1\cdots h_{\ell}\neq 0} } \frac{1}{(2\pi i )^2} \int_{(2)} \int_{(2)} \widetilde{\Upsilon}(\xi) \widetilde{\Upsilon}(\eta) X^{\xi + \eta}  \\
& \times  \sum_{\substack{ 1 \leq m_1,\dots,m_{\ell} <\infty \\ 1\leq n_1,\dots,n_{\ell} <\infty  \\ m_jN_j-n_jM_j = h_j \ \forall j }  }  \frac{\tau_{A_1} (m_1)\cdots \tau_{A_{\ell}} (m_{\ell}) \tau_{B_1} (n_1)\cdots \tau_{B_{\ell}} (n_{\ell}) }{(m_1 \cdots m_{\ell} )^{\frac{1}{2}+\xi} (n_1 \cdots n_{\ell} )^{\frac{1}{2}+\eta}} \hat{\psi} \left( \frac{T}{2\pi} \log \frac{m_1 \cdots m_{\ell} }{n_1 \cdots n_{\ell} } \right) \,d\xi\,d\eta,
\end{align*}
where $\hat{\psi}$ is the Fourier transform defined by
\begin{equation}\label{fourierdef}
\hat{\psi}(x) = \int_{-\infty}^{\infty} \psi(t) e^{-2\pi i xt}\,dt.
\end{equation}
Since $M_1\cdots M_{\ell}=N_1\cdots N_{\ell}$, it follows that
\begin{align}
&  \mathcal{S}_{\ell}  = \frac{1}{(\ell !)^2} \sum_{\substack{M_1\cdots M_{\ell} = N_1\cdots N_{\ell} \\ (M_j,N_j)=1 \ \forall j}} \sum_{ \substack{h_1,\dots,h_{\ell} \in \mathbb{Z} \\ h_1\cdots h_{\ell}\neq 0} } \frac{1}{(2\pi i )^2} \int_{(2)} \int_{(2)} \widetilde{\Upsilon}(\xi) \widetilde{\Upsilon}(\eta) X^{\xi + \eta} (N_1\cdots N_{\ell})^{\frac{1}{2}+\xi} (M_1\cdots M_{\ell})^{\frac{1}{2}+\eta}  \label{Sldef2}\\
& \times  \sum_{\substack{ 1 \leq m_1,\dots,m_{\ell} <\infty \\ 1\leq n_1,\dots,n_{\ell} <\infty  \\ m_jN_j-n_jM_j = h_j \ \forall j }  }  \frac{\tau_{A_1} (m_1)\cdots \tau_{A_{\ell}} (m_{\ell}) \tau_{B_1} (n_1)\cdots \tau_{B_{\ell}} (n_{\ell}) }{(m_1 N_1 \cdots m_{\ell}N_{\ell} )^{\frac{1}{2}+\xi} (n_1M_1 \cdots n_{\ell}M_{\ell} )^{\frac{1}{2}+\eta}} \hat{\psi} \left( \frac{T}{2\pi} \log \frac{m_1 N_1 \cdots m_{\ell}N_{\ell} }{n_1 M_1 \cdots n_{\ell}M_{\ell} } \right) \,d\xi\,d\eta. \notag
\end{align}

Now the delta method in \cite{dfi} predicts for a suitable function $f$ that
\begin{align}
& \mathop{\sum\sum}\limits_{\substack{1\leq m,n<\infty \\ mN-nM=h}} \tau_A(m)\tau_B(n) f(mN,nM) \notag\\
& \hspace{.25in} \sim \ \frac{1}{(2\pi i )^2} \oint_{|z|=\varepsilon}\oint_{|w|=\varepsilon} \frac{1}{N^{1+z}M^{1+w}} \prod_{\alpha\in A } \zeta(1+z+\alpha) \prod_{\beta\in B } \zeta(1+w+\beta) \notag\\
& \hspace{.5in}\times \sum_{q=1}^{\infty} \frac{c_q(h) (q,N)^{1+z} (q,M)^{1+w}}{ q^{2+z+w} } G_A\left( 1+z,\frac{q}{(q,N)} \right) G_B\left( 1+w,\frac{q}{(q,M)} \right)  \notag\\
& \hspace{.75in}\times \int_{-\infty}^{\infty} x^z (x-h)^w f(x,x-h) \,dx \,dw\,dz, \label{additivedivisorhypothesis}
\end{align}
where
\begin{equation*}
c_q(h) := \sum_{\substack{a\bmod{q} \\ (a,q)=1}} e^{2\pi i an/q}
\end{equation*}
is the Ramanujan sum and $G_E$ is defined for finite multisets $E$ of complex numbers by
\begin{equation}\label{bigGEdef}
G_{E}(s,q) = \sum_{ d|q} \frac{\mu(d) d^s}{\phi(d)} \sum_{e|d} \frac{\mu(e)}{e^s}  g_{E}\left(s,\frac{eq}{d}\right),
\end{equation}
with $g_{E}(s,n)$ defined by
\begin{equation}\label{smallgedef}
g_{E}(s,n) = \prod_{p|n}  \Bigg\{ \prod_{\gamma\in E} (1-p^{-s-\gamma})   \Bigg\} \sum_{m=0}^{\infty} \frac{\tau_E (p^{m+\ordp(n)}) }{p^{ms}} .
\end{equation}
For details on how to derive this prediction, see Section~1 of \cite{conreygonek}. Forms of this prediction are stated in Section~3 of \cite{CK3}, equation (4) of \cite{CK4}, and Section~6 of \cite{CK5}. We may put these forms into \eqref{additivedivisorhypothesis} by interpreting them as a sum of residues and writing the sum of residues as a contour integral. The left-hand side of \eqref{additivedivisorhypothesis} may be called a \textit{correlation of divisor-sums}. (In the case $M=N=1$, some authors call it a ``shifted convolution sum.'')

We apply the prediction \eqref{additivedivisorhypothesis} to each $m_j,n_j$-sum in \eqref{Sldef2} by taking in \eqref{additivedivisorhypothesis} $A=A_j$, $B=B_j$, $m=m_j$, $n=n_j$, $M=M_j$, $N=N_j$, and
\begin{align*}
f(mN,nM)
& = f(m_1N_1,n_1M_1;m_2N_2,n_2M_2;\dots; m_{\ell}N_{\ell},n_{\ell}M_{\ell}) \\
& =  (m_1 N_1 \cdots m_{\ell}N_{\ell} )^{-\frac{1}{2}-\xi} (n_1M_1 \cdots n_{\ell}M_{\ell} )^{-\frac{1}{2}-\eta} \hat{\psi} \left( \frac{T}{2\pi} \log \frac{m_1 N_1 \cdots m_{\ell}N_{\ell} }{n_1 M_1 \cdots n_{\ell}M_{\ell} } \right).
\end{align*}
We also assume an \textit{independence hypothesis} in the sense that any error terms implied in the conjecture \eqref{additivedivisorhypothesis} do not contribute to the main term in the resulting expression for $\mathcal{S}_{\ell}$. This leads us to predict that
\begin{align}
\mathcal{S}_{\ell} \sim & \frac{1}{(\ell !)^2}  \sum_{\substack{M_1\cdots M_{\ell} = N_1\cdots N_{\ell} \\ (M_j,N_j)=1 \ \forall j}} \sum_{ \substack{h_1,\dots,h_{\ell} \in \mathbb{Z} \\ h_1\cdots h_{\ell}\neq 0} } \frac{1}{(2\pi i )^2} \int_{(2)} \int_{(2)} \widetilde{\Upsilon}(\xi) \widetilde{\Upsilon}(\eta) X^{\xi + \eta}    (N_1\cdots N_{\ell})^{\frac{1}{2}+\xi} \notag\\
& \times (M_1\cdots M_{\ell})^{\frac{1}{2}+\eta}\int_{\max\{0,h_1\}}^{\infty} \cdots \int_{\max\{0,h_{\ell}\}}^{\infty} \hat{\psi} \left( \frac{T}{2\pi} \log \frac{x_1\cdots x_{\ell}}{(x_1-h_1)\cdots (x_{\ell}-h_{\ell})} \right)\notag\\
& \hspace{.25in} \times \prod_{j=1}^{\ell} \Bigg\{ \frac{1}{(2\pi i )^2} \oint_{|z|=\varepsilon}\oint_{|w|=\varepsilon} \frac{ x_j^{-\frac{1}{2}-\xi+z} (x_j-h_j)^{-\frac{1}{2}-\eta +w}}{N_j^{1+z}M_j^{1+w}} \prod_{\alpha\in A_j } \zeta(1+z+\alpha)  \notag\\
& \hspace{.5in}  \times \prod_{\beta\in B_j } \zeta(1+w+\beta)\sum_{q=1}^{\infty} \frac{c_q(h_j) (q,N_j)^{1+z} (q,M_j)^{1+w}}{ q^{2+z+w} } G_{A_j}\left( 1+z,\frac{q}{(q,N_j)} \right)    \notag\\
& \hspace{.75in} \times G_{B_j}\left( 1+w,\frac{q}{(q,M_j)} \right) \,dw\,dz \Bigg\} \,dx_1\cdots\,dx_{\ell}\,d\xi\,d\eta. \label{applydeltamethod}
\end{align}
We let $\epsilon_j=\text{sgn}(h_j)$ and relabel $h_j$ as $\epsilon_jh_j$, where now $h_j$ is positive. Then, we make the change of variables $x_j\mapsto h_jy_j$ for each $j$ to see that \eqref{applydeltamethod} implies
\begin{align}
\mathcal{S}_{\ell} \sim & \frac{1}{(\ell !)^2} \sum_{\substack{M_1\cdots M_{\ell} = N_1\cdots N_{\ell} \\ (M_j,N_j)=1 \ \forall j}} \sum_{\epsilon_1,\dots,\epsilon_{\ell}\in\{1,-1\}} \sum_{ 1\leq h_1,\dots,h_{\ell} <\infty } \frac{1}{(2\pi i )^2} \int_{(2)} \int_{(2)} \widetilde{\Upsilon}(\xi) \widetilde{\Upsilon}(\eta) X^{\xi + \eta}  (N_1\cdots N_{\ell})^{\frac{1}{2}+\xi}   \label{applydeltamethod2}\\
& \times (M_1\cdots M_{\ell})^{\frac{1}{2}+\eta} \int_{\max\{0,\epsilon_1\}}^{\infty} \cdots \int_{\max\{0,\epsilon_{\ell}\}}^{\infty} \hat{\psi} \left( \frac{T}{2\pi} \log \frac{y_1\cdots y_{\ell}}{(y_1-\epsilon_1)\cdots (y_{\ell}-\epsilon_{\ell})} \right)\notag\\
& \hspace{.25in} \times \prod_{j=1}^{\ell} \Bigg\{ \frac{1}{(2\pi i )^2} \oint_{|z|=\varepsilon}\oint_{|w|=\varepsilon} \frac{ h_j^{-\xi+z-\eta+w} y_j^{-\frac{1}{2}-\xi+z} (y_j-\epsilon_j)^{-\frac{1}{2}-\eta +w}}{N_j^{1+z}M_j^{1+w}} \prod_{\alpha\in A_j } \zeta(1+z+\alpha)  \notag\\
& \hspace{.5in}  \times \prod_{\beta\in B_j } \zeta(1+w+\beta) \sum_{q=1}^{\infty} \frac{c_q(h_j) (q,N_j)^{1+z} (q,M_j)^{1+w}}{ q^{2+z+w} } G_{A_j}\left( 1+z,\frac{q}{(q,N_j)} \right)     \notag\\
& \hspace{.75in} \times G_{B_j}\left( 1+w,\frac{q}{(q,M_j)} \right) \,dw\,dz \Bigg\} \,dy_1\cdots\,dy_{\ell}\,d\xi\,d\eta. \notag
\end{align}
We next insert the identity
\begin{equation*}
c_{q }(h_j) = \sum_{\substack{d |q  \\d |h_j}} d \mu \left( \frac{q }{d }\right)
\end{equation*}
for the Ramanujan sum, and then relabel $h_j$ as $h_jd$ and $q$ as $qd$ to find that \eqref{applydeltamethod2} implies
\begin{align}
\mathcal{S}_{\ell} \sim & \frac{1}{(\ell !)^2} \sum_{\substack{M_1\cdots M_{\ell} = N_1\cdots N_{\ell} \\ (M_j,N_j)=1 \ \forall j}} \sum_{\epsilon_1,\dots,\epsilon_{\ell}\in\{1,-1\}}  \frac{1}{(2\pi i )^2} \int_{(2)} \int_{(2)} \widetilde{\Upsilon}(\xi) \widetilde{\Upsilon}(\eta) X^{\xi + \eta}    (N_1\cdots N_{\ell})^{\frac{1}{2}+\xi} \notag\\
& \times (M_1\cdots M_{\ell})^{\frac{1}{2}+\eta} \int_{\max\{0,\epsilon_1\}}^{\infty} \cdots \int_{\max\{0,\epsilon_{\ell}\}}^{\infty} \hat{\psi} \left( \frac{T}{2\pi} \log \frac{y_1\cdots y_{\ell}}{(y_1-\epsilon_1)\cdots (y_{\ell}-\epsilon_{\ell})} \right)\notag\\
& \hspace{.25in} \times \prod_{j=1}^{\ell} \Bigg\{ \frac{1}{(2\pi i )^2} \oint_{|z|=\varepsilon}\oint_{|w|=\varepsilon} \frac{  y_j^{-\frac{1}{2}-\xi+z} (y_j-\epsilon_j)^{-\frac{1}{2}-\eta +w}}{N_j^{1+z}M_j^{1+w}} \zeta(\xi+\eta-z-w)  \notag\\
& \hspace{.5in}  \times \prod_{\alpha\in A_j } \zeta(1+z+\alpha)\prod_{\beta\in B_j } \zeta(1+w+\beta) \sum_{d=1}^{\infty} \frac{1}{d^{1+\xi+\eta}}       \notag\\
& \hspace{.75in} \times \sum_{q=1}^{\infty} \frac{\mu(q) (qd,N_j)^{1+z} (qd,M_j)^{1+w}}{ q^{2+z+w} }G_{A_j}\left( 1+z,\frac{qd}{(qd,N_j)} \right) \notag \\
& \hspace{1in} \times G_{B_j}\left( 1+w,\frac{qd}{(qd,M_j)} \right) \,dw\,dz \Bigg\} \,dy_1\cdots\,dy_{\ell}\,d\xi\,d\eta \notag
\end{align}
because
\begin{equation*}
\sum_{1\leq h_j<\infty} h_j^{-\xi+z-\eta+w} = \zeta(\xi+\eta-z-w)
\end{equation*}
for each $j$.

We next move the $\xi$- and $\eta$-lines to $\re(\xi)=2\varepsilon$ and $\re(\eta)=2\varepsilon$. Doing so traverses the poles of the factors $\zeta(\xi+\eta-z-w)$. However, we expect the residues of the integrand at these poles to be negligible because of the presence of the factor $\chi(1+z+w-\xi-\eta)$ in the consequence~\eqref{lemma3.1consequence} of Lemma~\ref{betalemma} below. This factor is zero at the pole of $\zeta(\xi+\eta-z-w)$. We then insert the definition \eqref{fourierdef} of $\hat{\psi}$ and interchange the order of summation to arrive at the prediction
\begin{align}
\mathcal{S}_{\ell} \sim & \frac{1}{(\ell!)^2 (2\pi i )^2} \int_{(2\varepsilon)} \int_{(2\varepsilon)} \widetilde{\Upsilon}(\xi) \widetilde{\Upsilon}(\eta) \frac{X^{\xi + \eta}}{T} \int_0^{\infty} \psi\left( \frac{t}{T}\right) \frac{1}{(2\pi i)^{2\ell}} \oint_{|z_1|=\varepsilon} \cdots \oint_{|z_{\ell}|=\varepsilon}   \notag\\
& \times \oint_{|w_1|=\varepsilon} \cdots \oint_{|w_{\ell}|=\varepsilon}\sum_{\substack{M_1\cdots M_{\ell} = N_1\cdots N_{\ell} \\ (M_j,N_j)=1 \ \forall j}} \prod_{j=1}^{\ell} \Bigg\{ \zeta(\xi+\eta-z_j-w_j) N_j^{-\frac{1}{2}+\xi-z_j} M_j^{-\frac{1}{2}+\eta-w_j} \notag\\
& \hspace{.25in} \times\sum_{\epsilon_j=\pm 1 }       \int_{\max\{0,\epsilon_j\}}^{\infty}  \left( \frac{y_j-\epsilon_j}{y_j} \right)^{it}       y_j^{-\frac{1}{2}-\xi+z_j} (y_j-\epsilon_j)^{-\frac{1}{2}-\eta +w_j} \prod_{\alpha\in A_j } \zeta(1+z_j+\alpha)  \notag\\
& \hspace{.5in}  \times \prod_{\beta\in B_j } \zeta(1+w_j+\beta) \sum_{d=1}^{\infty} \frac{1}{d^{1+\xi+\eta}} \sum_{q=1}^{\infty} \frac{\mu(q) (qd,N_j)^{1+z_j} (qd,M_j)^{1+w_j}}{ q^{2+z_j+w_j} }      \notag\\
& \hspace{.75in} \times G_{A_j}\left( 1+z_j,\frac{qd}{(qd,N_j)} \right)G_{B_j}\left( 1+w_j,\frac{qd}{(qd,M_j)} \right) \,dy_j \,dw_{j} \,dz_j \Bigg\}\,d\xi\,d\eta. \label{applydeltamethod3}
\end{align}
We may now evaluate each $y_j$-integral through the following lemma.

\begin{lemma}\label{betalemma}
If $a,b$ are complex numbers with $0<\re(a),\re(b)<\frac{1}{2}$, then
\begin{align*}
& \sum_{\epsilon=\pm 1}\int_{\max\{0,\epsilon\}}^{\infty}y^{a-1} (y-\epsilon)^{b-1}\,dy \\
& = \chi(a+b) \chi(1-a)\chi(1-b)\left(\frac{1+\tan(\frac{\pi}{2} a)\tan(\frac{\pi}{2} b) }{2} \right),
\end{align*}
where $\chi(s)$ is the factor in the functional equation $\zeta(s)=\chi(s)\zeta(1-s)$.
\end{lemma}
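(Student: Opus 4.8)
\section*{Proof proposal for Lemma~\ref{betalemma}}

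The plan is to compute the two integrals on the left-hand side explicitly as Beta functions, combine them using the reflection formula $\Gamma(s)\Gamma(1-s)=\pi/\sin(\pi s)$, and then match the result to the right-hand side by expanding each $\chi$-factor via the classical formula $\chi(s)=2^{s}\pi^{s-1}\sin(\tfrac{\pi}{2}s)\Gamma(1-s)$ that follows from the functional equation $\zeta(s)=\chi(s)\zeta(1-s)$.

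First I would dispose of the $\epsilon=-1$ term. Under the hypotheses $0<\re(a),\re(b)<\tfrac12$ we have $\re(a)>0$ and $\re(a+b)<1$, so $\int_0^{\infty}y^{a-1}(y+1)^{b-1}\,dy$ converges at both endpoints and equals the standard Beta integral $B(a,1-a-b)=\Gamma(a)\Gamma(1-a-b)/\Gamma(1-b)$. For the $\epsilon=1$ term, in $\int_1^{\infty}y^{a-1}(y-1)^{b-1}\,dy$ I would substitute $y\mapsto 1/u$; writing $(1/u-1)^{b-1}=(1-u)^{b-1}u^{1-b}$ and collecting powers of $u$ turns this into $\int_0^1 u^{-a-b}(1-u)^{b-1}\,du=B(1-a-b,b)=\Gamma(1-a-b)\Gamma(b)/\Gamma(1-a)$, convergent because $\re(b)>0$ and $\re(a+b)<1$. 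Adding the two contributions, the left-hand side equals $\Gamma(1-a-b)\big(\Gamma(b)/\Gamma(1-a)+\Gamma(a)/\Gamma(1-b)\big)$, and a single use of $\Gamma(s)\Gamma(1-s)=\pi/\sin(\pi s)$ on each denominator rewrites this as $\frac{1}{\pi}\Gamma(a)\Gamma(b)\Gamma(1-a-b)\big(\sin(\pi a)+\sin(\pi b)\big)$.

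It then remains to verify that the right-hand side has this same value. Expanding each factor with $\chi(s)=2^{s}\pi^{s-1}\sin(\tfrac{\pi}{2}s)\Gamma(1-s)$ and using $\sin(\tfrac{\pi}{2}(1-a))=\cos(\tfrac{\pi}{2}a)$, the powers of $2$ collapse to $4$ and the powers of $\pi$ to $1/\pi$, so that $\chi(a+b)\chi(1-a)\chi(1-b)=\frac{4}{\pi}\sin(\tfrac{\pi}{2}(a+b))\cos(\tfrac{\pi}{2}a)\cos(\tfrac{\pi}{2}b)\,\Gamma(a)\Gamma(b)\Gamma(1-a-b)$. Next I would write the trigonometric prefactor as $\tfrac12\big(1+\tan(\tfrac{\pi}{2}a)\tan(\tfrac{\pi}{2}b)\big)=\cos(\tfrac{\pi}{2}(a-b))\big/\big(2\cos(\tfrac{\pi}{2}a)\cos(\tfrac{\pi}{2}b)\big)$, which cancels the two cosine factors, and finally apply the product-to-sum identity $\sin(\tfrac{\pi}{2}(a+b))\cos(\tfrac{\pi}{2}(a-b))=\tfrac12\big(\sin(\pi a)+\sin(\pi b)\big)$. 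This brings the right-hand side to exactly $\frac{1}{\pi}\Gamma(a)\Gamma(b)\Gamma(1-a-b)\big(\sin(\pi a)+\sin(\pi b)\big)$, matching the left-hand side.

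I do not expect a genuine obstacle here; the argument is bookkeeping with two Beta integrals and a pair of elementary trigonometric identities. The only points that need care are checking that the stated range $0<\re(a),\re(b)<\tfrac12$ really lies inside the region of absolute convergence of both Beta integrals, so that all interchanges and the substitution $y\mapsto 1/u$ are legitimate, and keeping the normalization of $\chi$ consistent so that the powers of $2$ and $\pi$ cancel exactly. Once the identity holds on this strip it extends by analytic continuation to the full domain where both sides are holomorphic, but the stated range is all that is needed for the application to \eqref{applydeltamethod3}.
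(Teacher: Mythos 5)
Your proposal is correct and follows essentially the same route as the paper: both evaluate the two integrals as the beta functions $B(a,1-a-b)$ and $B(b,1-a-b)$, pass to gamma functions via the reflection formula, and match the $\chi$-factors using $\chi(s)=2^{s}\pi^{s-1}\sin(\tfrac{\pi}{2}s)\Gamma(1-s)$ together with the same elementary trigonometric identities. The only cosmetic difference is that you expand both sides to the common form $\tfrac{1}{\pi}\Gamma(a)\Gamma(b)\Gamma(1-a-b)\bigl(\sin(\pi a)+\sin(\pi b)\bigr)$, whereas the paper manipulates the left-hand side directly into the $\chi$-expression using $\chi(s)\chi(1-s)=1$.
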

\begin{proof}
For brevity, define $\mathcal{I}(a,b)$ by
\begin{equation*}
\mathcal{I}(a,b): = \sum_{\epsilon=\pm 1}\int_{\max\{0,\epsilon\}}^{\infty}y^{a-1} (y-\epsilon)^{b-1}\,dy.
\end{equation*}
We have
\begin{align*}
\mathcal{I}(a,b) = B(a,1-a-b)+B(b,1-a-b),
\end{align*}
where $B(\cdot,\cdot)$ is the beta function. Writing the beta function in terms of the gamma function, we deduce that
\begin{equation*}
\mathcal{I}(a,b)
= \Bigg( \frac{\Gamma(a) }{\Gamma(1-b)} + \frac{\Gamma(b) }{\Gamma(1-a)}\Bigg) \Gamma(1-a-b).
\end{equation*}
Since $\Gamma(s)\Gamma(1-s)=\pi/\sin(\pi s)$, it follows that
\begin{equation*}
\mathcal{I}(a,b)
= \Bigg( \frac{1 }{\sin \pi a} + \frac{1 }{\sin \pi b}\Bigg)\frac{\pi  \Gamma(1-a-b)}{\Gamma(1-a)\Gamma(1-b)}.
\end{equation*}
Now the identity $\sin A+\sin B=2\sin(\frac{A}{2}+\frac{B}{2})\cos(\frac{A}{2}-\frac{B}{2})$ implies
$$
\frac{1 }{\sin \pi a} + \frac{1 }{\sin \pi b} = \frac{2\sin ( \frac{\pi}{2}(a+b)) \cos (\frac{\pi}{2}(a-b))}{\sin (\pi a) \sin (\pi b)}.
$$
Thus
\begin{align*}
\mathcal{I}(a,b) = \frac{\chi(a+b)\cos (\frac{\pi}{2}(a-b)) }{2^{a+b-1}\pi^{a+b-2}\Gamma(1-a)\Gamma(1-b)\sin (\pi a) \sin (\pi b) } ,
\end{align*}
where we have also used the definition $\chi(s)=2^s\pi^{s-1} \sin (\frac{\pi}{2} s) \Gamma(1-s)$. The double-angle formulae then imply that
\begin{equation*}
\mathcal{I}(a,b) = \frac{\chi(a+b)\cos (\frac{\pi}{2}(a-b)) }{2\chi(a)\chi(b) \cos (\frac{\pi}{2} a) \cos (\frac{\pi}{2} b)}.
\end{equation*}
The lemma now follows from this, the identity $\chi(s)\chi(1-s)=1$, and the fact that
$$
 \frac{ \cos (\frac{\pi}{2}(a-b)) }{ \cos (\frac{\pi}{2} a) \cos (\frac{\pi}{2} b)} = 1+\tan(\tfrac{\pi}{2} a)\tan(\tfrac{\pi}{2} b).
$$
\end{proof}

If $\re(\xi)=\re(\eta)=2\varepsilon$, $|z_j|,|w_j|=\varepsilon$, and $t$ is real, then it follows from Lemma~\ref{betalemma} with $a=\frac{1}{2}-\xi+z_j-it$ and $b=\frac{1}{2}-\eta+w_j+it$ that
\begin{align}
& \sum_{\epsilon_j=\pm 1 }       \int_{\max\{0,\epsilon_j\}}^{\infty}  \left( \frac{y_j-\epsilon_j}{y_j} \right)^{it} y_j^{-\frac{1}{2}-\xi+z_j} (y_j-\epsilon_j)^{-\frac{1}{2}-\eta +w_j} \,dy_j \notag\\
& \hspace{.25in} = \chi(1+z_j+w_j-\xi-\eta)\chi(\tfrac{1}{2}+\xi-z_j+it)\chi (\tfrac{1}{2}+\eta-w_j-it) \notag \\
& \hspace{.5in} \times \left(\frac{1+\tan(\frac{\pi}{2} (\tfrac{1}{2}-\xi+z_j-it))\tan(\frac{\pi}{2} (\tfrac{1}{2}-\eta+w_j+it)) }{2} \right). \label{lemma3.1consequence}
\end{align}
If it also holds that $T\leq t\leq 2T$ and $|\text{Im}(\xi)|,|\text{Im}(\eta)|\leq T/2$, say, then
$$
\tan(\tfrac{\pi}{2} (\tfrac{1}{2}-\xi+z_j-it))\tan(\tfrac{\pi}{2} (\tfrac{1}{2}-\eta+w_j+it)) = 1 +O\big( e^{-T/4} \big).
$$
From this, the functional equation
$$
\zeta(\xi+\eta-z_j-w_j) \chi(1+z_j+w_j-\xi-\eta) = \zeta(1+z_j+w_j-\xi-\eta),
$$
and \eqref{applydeltamethod3}, we arrive at the prediction
\begin{align}
\mathcal{S}_{\ell} \sim & \frac{1}{(\ell!)^2 (2\pi i )^2} \int_{(2\varepsilon)} \int_{(2\varepsilon)} \widetilde{\Upsilon}(\xi) \widetilde{\Upsilon}(\eta) \frac{X^{\xi + \eta}}{T} \int_0^{\infty} \psi\left( \frac{t}{T}\right) \frac{1}{(2\pi i)^{2\ell}} \oint_{|z_1|=\varepsilon} \cdots \oint_{|z_{\ell}|=\varepsilon}   \notag\\
& \times \oint_{|w_1|=\varepsilon} \cdots \oint_{|w_{\ell}|=\varepsilon} \sum_{\substack{M_1\cdots M_{\ell} = N_1\cdots N_{\ell} \\ (M_j,N_j)=1 \ \forall j}} \prod_{j=1}^{\ell} \Bigg\{ \zeta(1+z_j+w_j-\xi-\eta) \chi (\tfrac{1}{2} +\xi-z_j+it) \notag\\
& \hspace{.25in} \times  \chi (\tfrac{1}{2} +\eta-w_j-it) \prod_{\alpha\in A_j } \zeta(1+z_j+\alpha)  \prod_{\beta\in B_j } \zeta(1+w_j+\beta) \notag\\
& \hspace{.5in}  \times  N_j^{-\frac{1}{2}+\xi-z_j} M_j^{-\frac{1}{2}+\eta-w_j} \sum_{d=1}^{\infty} \frac{1}{d^{1+\xi+\eta}} \sum_{q=1}^{\infty} \frac{\mu(q) (qd,N_j)^{1+z_j} (qd,M_j)^{1+w_j}}{ q^{2+z_j+w_j} }      \notag\\
& \hspace{.75in} \times G_{A_j}\left( 1+z_j,\frac{qd}{(qd,N_j)} \right)G_{B_j}\left( 1+w_j,\frac{qd}{(qd,M_j)} \right) \,dw_{j} \,dz_j \Bigg\}\,d\xi\,d\eta. \label{applydeltamethod4}
\end{align}

\section{Local calculations}

Our next task is to evaluate the sum over the $M_j,N_j$ in \eqref{applydeltamethod4}. By multiplicativity, we may formally write this sum as an Euler product; see \eqref{sumaseulerproduct} below. Our main result for this section (Theorem~\ref{euler}) is an expression for the local factor of this Euler product in terms of the coefficients $I_{C,D}(m)$, which are defined by \eqref{IABdef} for finite multisets $C,D$ of complex numbers. We first prove some basic properties of these coefficients.
\begin{lemma}\label{IABlemma}
Let $C$ and $D$ be finite multisets of complex numbers, let $s$ be a complex number, and let $p$ be a prime.
\begin{enumerate}
\item[\upshape{(i)}] If $m$ is a positive integer, then
\begin{equation*}
m^{-s} I_{C,D}(m) = I_{C_s,D_s}(m),
\end{equation*}
where, for every multiset $E$, $E_s$ denotes the multiset $\{\gamma+s : \gamma\in E\}$.

\item[\upshape{(ii)}] If $r$ is a nonnegative integer, then
\begin{equation*}
I_{C\cup \{s\},D}(p^r) = I_{C,D}(p^r) +p^{-s} I_{C\cup \{s\},D}(p^{r-1}),
\end{equation*}
where the last term is to be interpreted as $0$ if $r=0$.

\item[\upshape{(iii)}] If $R$ and $M$ are nonnegative integers, then
\begin{equation*}
\sum_{r=0}^R I_{C,D}(p^{r+M})= I_{C\cup \{0\},D}(p^{R+M}) - I_{C\cup \{0\},D}(p^{M-1}),
\end{equation*}
where the last term is to be interpreted as $0$ if $M=0$.

\item[\upshape{(iv)}] We have
\begin{equation*}
\sum_{k=0}^{\infty}I_{C,\{-s\}}(p^k)p^{- k(1+s)} = \left(1-\frac{1}{p}\right) \prod_{\gamma\in C}\left( 1-\frac{1}{p^{1+s+\gamma}}\right)^{-1}
\end{equation*}
whenever the left-hand side converges absolutely.
\end{enumerate}
\end{lemma}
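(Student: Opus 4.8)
The plan is to establish all four parts from the generating-function identity \eqref{IABdef}, treating the Dirichlet series $\prod_{\gamma\in C}\zeta(s+\gamma)/\prod_{\delta\in D}\zeta(s+\delta)$ as a formal (or, where convergent, honest) object and extracting coefficients. Part (i) is immediate: replacing $s$ by $s+t$ in \eqref{IABdef} and comparing with the definition of $I_{C_t,D_t}$ gives $m^{-t}I_{C,D}(m)=I_{C_t,D_t}(m)$; since all coefficients $I_{C,D}(m)$ are multiplicative, it suffices throughout to work prime by prime, so I will freely pass to the local factor $\sum_{r\ge 0} I_{C,D}(p^r)p^{-rs}$, which equals $\prod_{\gamma\in C}(1-p^{-s-\gamma})^{-1}\prod_{\delta\in D}(1-p^{-s-\delta})$.

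For part (ii), I would start from the local identity
\[
\sum_{r\ge 0} I_{C\cup\{s\},D}(p^r)x^r = \frac{1}{1-p^{-s}x}\sum_{r\ge 0} I_{C,D}(p^r)x^r,
\]
valid with $x=p^{-w}$ for $\re(w)$ large and then as an identity of formal power series in $x$. Multiplying through by $(1-p^{-s}x)$ and comparing the coefficient of $x^r$ on both sides yields exactly $I_{C\cup\{s\},D}(p^r)-p^{-s}I_{C\cup\{s\},D}(p^{r-1})=I_{C,D}(p^r)$, which rearranges to the claimed recursion (with the $r=0$ term handled by the usual convention that negative powers have zero coefficient). Part (iii) is a telescoping consequence of part (ii): applying (ii) with $s=0$ gives $I_{C,D}(p^k)=I_{C\cup\{0\},D}(p^k)-I_{C\cup\{0\},D}(p^{k-1})$ for every $k\ge 0$, and summing this over $k=M,M+1,\dots,R+M$ collapses to $I_{C\cup\{0\},D}(p^{R+M})-I_{C\cup\{0\},D}(p^{M-1})$, again with the boundary convention when $M=0$.

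For part (iv), the point is simply to identify the left-hand side as the local factor of the Dirichlet series for $C$, $D=\{-s\}$ evaluated appropriately. By the Euler product for \eqref{IABdef} with $D=\{-s\}$,
\[
\sum_{k\ge 0} I_{C,\{-s\}}(p^k)x^k = (1-p^{s}x)\prod_{\gamma\in C}(1-p^{-\gamma}x)^{-1}\qu\text{(formal in }x\text{)};
\]
wait—here I must be careful with the shift, so instead I would take $x = p^{-(1+s)}$ directly, giving $p^{s}x = p^{-1}$ and $p^{-\gamma}x = p^{-(1+s+\gamma)}$, so the sum becomes $(1-p^{-1})\prod_{\gamma\in C}(1-p^{-(1+s+\gamma)})^{-1}$, which is exactly the asserted expression. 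The only genuine obstacle is bookkeeping with convergence: the clean way is to prove every identity first for $\re(s)$ (and the auxiliary variable) large enough that everything converges absolutely, where the manipulations are justified term by term, and then note that parts (i)–(iii) are identities among finitely many fixed complex numbers and polynomials in $p^{-s}$ and hence persist by analytic continuation / polynomial identity, while part (iv) is stated only under the hypothesis of absolute convergence, so no continuation is needed there. I expect the main (minor) obstacle to be phrasing the formal-power-series versus convergent-series distinction cleanly enough that the boundary conventions in (ii) and (iii) are unambiguous.
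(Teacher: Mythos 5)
Your proposal is correct and takes essentially the same approach as the paper: everything is extracted from the defining Dirichlet series \eqref{IABdef}, with (ii) amounting to the same coefficient comparison the paper performs by splitting the explicit convolution sum according to the exponent of the new variable (your multiplication of the local generating function by $1-p^{-s}x$ is that computation in generating-function form), (iii) obtained by telescoping (ii) at $s=0$, and (iv) read off directly from the Euler product, so no further comment is needed.
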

\begin{proof}
If $C=\{\gamma_1,\dots,\gamma_h\}$ and $D=\{\delta_1,\dots,\delta_{\ell}\}$, then the definition \eqref{IABdef} implies
\begin{equation}\label{IABdef2}
I_{C,D} (m) = \sum_{m_1\cdots m_h n_1\cdots n_{\ell} = m} m_1^{-\gamma_1} \cdots m_h^{-\gamma_h} \mu(n_1) n_1^{-\delta_1} \cdots \mu(n_{\ell}) n_{\ell}^{-\delta_{\ell}},
\end{equation}
and (i) immediately follows.

To prove (ii), use \eqref{IABdef2} to write
\begin{equation}\label{IABlemma(ii)}
I_{C\cup \{s\},D} (p^r) = \sum_{\nu+m_1+\cdots +m_h +n_1+\cdots +n_{\ell} = r} p^{-\nu s} p^{-m_1\gamma_1} \cdots p^{-m_h\gamma_h} \mu(p^{n_1}) p^{-n_1\delta_1} \cdots \mu(p^{n_{\ell}}) p^{-n_{\ell}\delta_{\ell}},
\end{equation}
where $\nu$ and the $m_i$'s and $n_i$'s run through nonnegative integers. By \eqref{IABdef2}, we see that $I_{C,D}(p^r)$ equals the sum of the terms on the right-hand side of \eqref{IABlemma(ii)} that have $\nu=0$, while the sum of the terms with $\nu \geq 1$ equals $p^{-s} I_{C\cup \{s\},D}(p^{r-1})$. This proves (ii). 

Next, to show (iii), we take $s=0$ in (ii) to deduce that
$$
I_{C,D}(p^{r+M}) =I_{C\cup\{0\},D} (p^{r+M})  - I_{C\cup\{0\},D} (p^{r+M-1}).  
$$
Summing both sides from $r=0$ to $r=R$ gives (iii).

Finally, (iv) follows immediately from the definition \eqref{IABdef} and the Euler product expansion of zeta.

\end{proof}

We next prove a generalization of Lemma~2 of \cite{CK5}. To state it, we define
\begin{align}
\Sigma(A,B,z,w,M,N;p) : =
& \sum_{q=0}^1 \sum_{d=0}^{\infty}\sum_{j=0}^{\infty}\sum_{k=0}^{\infty} (-1)^q I_{A,\{-z\}}\Big(p^{j+q+d-\min\{q+d,N\}}\Big)  \notag\\
& \hspace{.25in} \times I_{B,\{-w\}} \Big(p^{k+q+d-\min\{q+d,M\}}\Big) \notag \\
& \hspace{.5in} \times p^{-d-j(1+z)-k(1+w)-q(2+z+w)+\min\{q+d,N\}(1+z) +\min\{q+d,M\}(1+w)}.  \label{SigmaABdef}
\end{align}

\begin{lemma}\label{lemma2CKV}
Let $\epsilon>0$ be arbitrarily small, and let $p$ be a prime. Suppose that $A$ and $B$ are finite multisets of complex numbers and $z$ and $w$ are complex numbers such that $|\re(\gamma)|\leq \epsilon$ for all $\gamma\in A\cup B\cup\{z,w\}$. If $M$ and $N$ are nonnegative integers such that $\min\{M,N\}=0$, then
\begin{align*}
\Sigma(A,B,z,w,M,N;p) = p^{Mw+Nz} \left(1-\frac{1}{p^{1+w+z}}\right) \sum_{r=0}^{\infty} \frac{I_{A\cup \{w\},\{-z\}}(p^{r+M}) I_{B\cup \{z\},\{-w\}}(p^{r+N})}{ p^r}.
\end{align*}
\end{lemma}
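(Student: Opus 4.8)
Since both sides of the claimed identity are indexed by a single prime $p$ and the quantities $I_{A,\{-z\}}$, $I_{B,\{-w\}}$, etc., only see the $p$-part of their argument, the plan is to treat $\Sigma$ as a triple (or quadruple) Dirichlet-type sum in the exponents $d,j,k$ (and the two-term sum over $q$) and to simplify the $\min$ terms by splitting the ranges of summation. The key simplification is that since $\min\{M,N\}=0$, at most one of $M,N$ is positive; by the symmetry of the definition under swapping $(A,z,N)\leftrightarrow(B,w,M)$ it suffices to treat, say, $N=0$ (so $M\geq 0$ arbitrary), and then $\min\{q+d,N\}=0$ always, which already kills two of the four $\min$'s.

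With $N=0$ fixed, I would first carry out the $j$-sum: by Lemma~\ref{IABlemma}(iv) (applied with $C=A$, $s=z$),
\[
\sum_{j=0}^{\infty} I_{A,\{-z\}}(p^{j+q+d})\,p^{-j(1+z)}
\]
is \emph{not} quite of that shape because of the shift $q+d$ inside, so instead I would use Lemma~\ref{IABlemma}(iii) with $C=A$, $D=\{-z\}$, $M\rightsquigarrow q+d$ to collapse the $j$-sum (after re-indexing $r=j$, $R\to\infty$) into a difference of two $I_{A\cup\{0\},\{-z\}}$ values — and more usefully, combine (ii) and (iii) to rewrite the $j$-sum directly as a single $I_{A\cup\{w\},\{-z\}}$-type coefficient once the $q$-sum is folded in. Concretely, the $(-1)^q$ weighting over $q\in\{0,1\}$ together with the factor $p^{-q(2+z+w)}$ is engineered so that after the $j$- and $k$-sums are done, the two values $q=0,1$ assemble (via Lemma~\ref{IABlemma}(ii) run backwards, i.e. adjoining the element $w$ to $A$ and $z$ to $B$) into exactly the factor $(1-p^{-1-w-z})$ times the coefficients $I_{A\cup\{w\},\{-z\}}(p^{\cdot})$ and $I_{B\cup\{z\},\{-w\}}(p^{\cdot})$ appearing on the right-hand side. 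I would organize this by first summing over $d$ and $q$ with $d,j,k$ held, tracking the exponent $-d - q(2+z+w) + \min\{q+d,M\}(1+w)$ and the two $\min\{q+d,M\}$-dependent arguments of $I_{B,\{-w\}}$.

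The cleanest route is probably: (1) reduce to $N=0$ by symmetry; (2) execute the $j$-sum using Lemma~\ref{IABlemma}(iii) to get $I_{A\cup\{0\},\{-z\}}(p^{R+M})$-type telescoping, then recognize the combination with the $d$-sum; (3) handle the $q\in\{0,1\}$ alternating sum, which produces the prefactor $(1-p^{-1-w-z})$ and, crucially, converts $I_{A,\{-z\}}\mapsto I_{A\cup\{w\},\{-z\}}$ and $I_{B,\{-w\}}\mapsto I_{B\cup\{z\},\{-w\}}$ via the recursion in Lemma~\ref{IABlemma}(ii); (4) match the leftover exponent against $p^{Mw+Nz}=p^{Mw}$ and re-index the surviving sum as $\sum_{r\geq 0} I_{A\cup\{w\},\{-z\}}(p^{r+M})I_{B\cup\{z\},\{-w\}}(p^{r})p^{-r}$. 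Throughout, the hypothesis $|\re(\gamma)|\leq\epsilon$ guarantees absolute convergence of every interchange of summation, since each geometric-type series converges like $\sum p^{-c}$ with $c$ bounded below by something like $1-O(\epsilon)>0$.

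\textbf{Main obstacle.} The bookkeeping around the $\min\{q+d,M\}$ terms is the delicate part: when $N=0$ the $N$-minimums vanish, but the $M$-minimum still forces one to split the $(q,d)$-sum into the regime $q+d\leq M$ (where $\min=q+d$) and $q+d>M$ (where $\min=M$). Getting the two regimes to recombine — so that the boundary terms telescope and the final answer has the clean single factor $p^{Mw}$ rather than a case-split — is where the structure of the $I_{C,D}$ recursions (especially the identity in part (iii), which is exactly a telescoping statement) must be used with care. I expect this is precisely the generalization over Lemma~2 of \cite{CK5}: there the argument was done for the $\tau_E$ coefficients, and here one must check that the analogous manipulations go through verbatim with $I_{C,D}$ in place of $\tau_E$, which is legitimate because Lemma~\ref{IABlemma} shows $I_{C,D}$ obeys the same shift identity (i), the same one-step recursion (ii), and the same telescoping identity (iii) that $\tau_E=I_{E,\emptyset}$ satisfies.
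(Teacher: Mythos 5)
Your outline agrees with the paper's proof in its broad strokes: reduce to $N=0$ by the symmetry $\Sigma(A,B,z,w,M,N;p)=\Sigma(B,A,w,z,N,M;p)$, note absolute convergence from the divisor bound, split the sum according to the value of the $\min$ (the paper splits on $d<M$ versus $d\geq M$, which keeps the $q$-sum intact inside each regime), and run the telescoping identities of Lemma~\ref{IABlemma}. But the part you flag as the ``main obstacle'' is exactly where the real work lies, and your sketch both stops short of it and misattributes the mechanism. In the paper, the adjoining of $w$ to $A$ (and of $z$ to $B$) is \emph{not} produced by the alternating $q$-sum: it comes from summing over $d$ with the weight $p^{dw}$ and invoking parts (i) and (iii) of Lemma~\ref{IABlemma}, e.g. $\sum_{d<M}I_{A,\{-z\}}(p^{d})p^{dw}=p^{(M-1)w}I_{A\cup\{w\},\{-z\}}(p^{M-1})$, and similarly inside $\Sigma^{+}$ after grouping terms with fixed $k+d$ (resp.\ $j+d$). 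The role of the $q$-alternation is to make the $j$- and $k$-sums telescope, and in the regime $d\geq M$ this telescoping only works after an add-and-subtract of an auxiliary sum; the resulting boundary terms must then cancel against the $\Sigma^{-}$ (i.e.\ $d<M$) contribution, a cancellation your plan does not address.

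More importantly, after all the telescoping one is \emph{not} left with the claimed product: one is left with the three-term bilinear combination
\begin{align*}
\sum_{r\geq 0}\Bigl(I_{A\cup\{w\},\{-z\}}(p^{r+M})I_{B,\{-w\}}(p^{r})+I_{A,\{-z\}}(p^{r+M})I_{B\cup\{z\},\{-w\}}(p^{r})-I_{A,\{-z\}}(p^{r+M})I_{B,\{-w\}}(p^{r})\Bigr)p^{-r},
\end{align*}
and the passage from this to $\bigl(1-p^{-1-w-z}\bigr)\sum_{r}I_{A\cup\{w\},\{-z\}}(p^{r+M})I_{B\cup\{z\},\{-w\}}(p^{r})p^{-r}$ requires a further idea: the algebraic identity $(a_1+a_2)b_1+a_1(b_1+b_2)-a_1b_1=(a_1+a_2)(b_1+b_2)-a_2b_2$ combined with the one-step recursion of Lemma~\ref{IABlemma}(ii) (the ``trick'' from page~746 of \cite{CK3}), followed by re-indexing $r-1\mapsto r$ to extract the factor $1-p^{-1-w-z}$. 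Your proposal asserts that the $q$-sum ``assembles'' directly into this factor times the adjoined coefficients, which skips this recombination entirely; without it the argument does not close, so as written there is a genuine gap at the decisive step, even though the surrounding scaffolding (symmetry reduction, regime splitting, use of (i)--(iv)) is the same as the paper's.
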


\begin{proof}
First, observe that \eqref{IABdef2} and the divisor bound imply that if the elements of $C$ and $D$ each have real part $\geq -c$, then
\begin{equation}\label{divisorbound}
I_{C,D}(m)\ll_{\varepsilon} m^{c + \varepsilon}
\end{equation}
for arbitarily small $\varepsilon>0$. From this and the assumption that $|\re(\gamma)|\leq \epsilon$ for all $\gamma\in A\cup B\cup\{z,w\}$, we deduce the absolute convergence of the right-hand side of \eqref{SigmaABdef}. Thus, the sum $\Sigma(A,B,z,w,M,N;p)$ is well-defined.

To prove Lemma~\ref{lemma2CKV}, we may assume without loss of generality that $N=0$, since the case with $M=0$ will follow by symmetry. If $N=0$, then the definition \eqref{SigmaABdef} specializes to
\begin{align*}
\Sigma(A,B,z,w,M,0;p) = \sum_{q=0}^1
&  \sum_{d=0}^{\infty}\sum_{j=0}^{\infty}\sum_{k=0}^{\infty} (-1)^q I_{A,\{-z\}}(p^{j+q+d}) I_{B,\{-w\}} (p^{k+q+d-\min\{q+d,M\}})\\
& \times  p^{-d-j(1+z)-k(1+w)-q(2+z+w) +\min\{q+d,M\}(1+w)}.
\end{align*}
Split this into
\begin{equation}\label{Sigma+-}
\Sigma(A,B,z,w,M,0;p) = \Sigma^- + \Sigma^+,
\end{equation}
where $\Sigma^-$ is the sum of the terms with $d<M$, and $\Sigma^+$ is the sum of the terms with $d\geq M$. 

We first evaluate $\Sigma^-$. If $d<M$ and $q\in\{0,1\}$, then $q+d-\min\{q+d,M\}=0$ and $d+q(2+z+w)-\min\{q+d,M\}(1+w)=-dw+q(1+z)$. We use these and then carry out the summation over $q$ to write
\begin{align*}
\Sigma^- =
&   \sum_{d<M}\sum_{j=0}^{\infty}\sum_{k=0}^{\infty}   I_{A,\{-z\}}(p^{j +d}) I_{B,\{-w\}}(p^k)  p^{-j(1+z)-k(1+w)  +dw  }\\
& -  \sum_{d<M}\sum_{j=0}^{\infty}\sum_{k=0}^{\infty} I_{A,\{-z\}}(p^{j+1+d}) I_{B,\{-w\}}(p^k)  p^{-(j+1)(1+z)-k(1+w)  +dw  }.
\end{align*}
Factor out the $k$-sum to deduce that
\begin{align*}
\Sigma^- = \sum_{k=0}^{\infty}I_{B,\{-w\}}(p^k)p^{- k(1+w)} \Bigg\{
& \sum_{d<M}\sum_{j=0}^{\infty}   I_{A,\{-z\}}(p^{j +d})   p^{-j(1+z)   +dw  }\\
& -  \sum_{d<M}\sum_{j=0}^{\infty}  I_{A,\{-z\}}(p^{j+1+d})   p^{-(j+1)(1+z)   +dw  } \Bigg\}.
\end{align*}
The $j$-sums telescope, while we may evaluate the $k$-sum using Lemma~\ref{IABlemma}(iv). Hence
\begin{equation*}
\Sigma^- = \left(1-\frac{1}{p}\right) \prod_{\beta\in B}\left( 1-\frac{1}{p^{1+w+\beta}}\right)^{-1} \sum_{d<M}   I_{A,\{-z\}}(p^{ d})   p^{ dw  }.
\end{equation*}
From (i) and (iii) of Lemma~\ref{IABlemma}, we see that
\begin{equation*}
\begin{split}
\sum_{d<M}   I_{A,\{-z\}}(p^{ d})   p^{ dw  }
& = \sum_{d<M}   I_{A_{-w},\{-z-w\}}(p^{ d}) \\
& = I_{A_{-w}\cup\{0\},\{-z-w\}}(p^{ M-1})  = p^{ (M-1)w  } I_{A\cup\{w\},\{-z\}}(p^{ M-1}).
\end{split}
\end{equation*}
Thus
\begin{equation}\label{Sigma-}
\Sigma^- =  \left(1-\frac{1}{p}\right)p^{ (M-1)w  } I_{A\cup\{w\},\{-z\}}(p^{ M-1})   \prod_{\beta\in B}\left( 1-\frac{1}{p^{1+w+\beta}}\right)^{-1}.
\end{equation}

Having evaluated $\Sigma^-$, we next consider the sum $\Sigma^+$ defined in \eqref{Sigma+-}. If $d\geq M$ and $q\geq 0$, then $\min\{q+d,M\}=M$. We use this and then carry out the summation over $q$ to deduce that
\begin{align*}
\Sigma^+ =
& \sum_{d\geq M}\sum_{j=0}^{\infty}\sum_{k=0}^{\infty} I_{A,\{-z\}}(p^{j+d}) I_{B,\{-w\}}(p^{k+d-M})  p^{-d-j(1+z)-k(1+w) +M(1+w)} \\
& - \sum_{d\geq M}\sum_{j=0}^{\infty}\sum_{k=0}^{\infty}   I_{A,\{-z\}}(p^{j+1+d}) I_{B,\{-w\}}(p^{k+1+d-M})  p^{-d-(j+1)(1+z)-(k+1)(1+w) +M(1+w)}.
\end{align*}
We relabel $d$ as $d+M$ to see that
\begin{align*}
\Sigma^+ =
& p^{Mw}\sum_{d=0}^{\infty}\sum_{j=0}^{\infty}\sum_{k=0}^{\infty} I_{A,\{-z\}}(p^{j+d+M}) I_{B,\{-w\}}(p^{k+d})   p^{-d-j(1+z)-k(1+w)} \\
& - p^{Mw}\sum_{d=0}^{\infty}\sum_{j=0}^{\infty}\sum_{k=0}^{\infty}   I_{A,\{-z\}}(p^{j+1+d+M}) I_{B,\{-w\}}(p^{k+1+d})   p^{-d-(j+1)(1+z)-(k+1)(1+w)  }.
\end{align*}
We add and subtract an extra sum and arrive at
\begin{align*}
\Sigma^+ =
& p^{Mw}\sum_{d=0}^{\infty}\sum_{j=0}^{\infty}\sum_{k=0}^{\infty} I_{A,\{-z\}}(p^{j+d+M}) I_{B,\{-w\}}(p^{k+d})   p^{-d-j(1+z)-k(1+w)} \\
& - p^{Mw}\sum_{d=0}^{\infty}\sum_{j=0}^{\infty}\sum_{k=0}^{\infty} I_{A,\{-z\}}(p^{j+1+d+M}) I_{B,\{-w\}}(p^{k+d})   p^{-d-(j+1)(1+z)-k(1+w)} \\
& + p^{Mw}\sum_{d=0}^{\infty}\sum_{j=0}^{\infty}\sum_{k=0}^{\infty} I_{A,\{-z\}}(p^{j+1+d+M}) I_{B,\{-w\}}(p^{k+d})   p^{-d-(j+1)(1+z)-k(1+w)} \\
& - p^{Mw}\sum_{d=0}^{\infty}\sum_{j=0}^{\infty}\sum_{k=0}^{\infty}   I_{A,\{-z\}}(p^{j+1+d+M}) I_{B,\{-w\}}(p^{k+1+d})   p^{-d-(j+1)(1+z)-(k+1)(1+w)  }.
\end{align*}
We may combine the first two sums on the right-hand side and see that the $j$-sums telescope to leave only the $j=0$ terms of the first sum. Similarly, we may combine the third and fourth sums and see that the $k$-sums telescope to leave only the $k=0$ terms of the third sum. Therefore
\begin{align*}
\Sigma^+ =
& p^{Mw}\sum_{d=0}^{\infty}\sum_{k=0}^{\infty} I_{A,\{-z\}}(p^{d+M}) I_{B,\{-w\}}(p^{k+d})   p^{-d-k(1+w)}  \\
& + p^{Mw}\sum_{d=0}^{\infty}\sum_{j=0}^{\infty}  I_{A,\{-z\}}(p^{j+1+d+M}) I_{B,\{-w\}}(p^{d})   p^{-d-(j+1)(1+z)}.
\end{align*}
In the last sum, relabel $j+1$ as $j$ and then add and subtract the $j=0$ terms to deduce that
\begin{equation}\label{Sigma+}
\Sigma^+ = p^{Mw} \Big( \Sigma^+_1 + \Sigma^+_2 - \Sigma^+_3 \Big),
\end{equation}
where $\Sigma^+_1$, $\Sigma^+_2$, and $\Sigma^+_3$ are defined by
\begin{equation*}
\Sigma^+_1 = \sum_{d=0}^{\infty}\sum_{k=0}^{\infty} I_{A,\{-z\}}(p^{d+M}) I_{B,\{-w\}}(p^{k+d})   p^{-d-k(1+w)},
\end{equation*}
\begin{equation}\label{Sigma2+}
\Sigma^+_2 = \sum_{d=0}^{\infty}\sum_{j=0}^{\infty}  I_{A,\{-z\}}(p^{j+d+M}) I_{B,\{-w\}}(p^{d})   p^{-d-j(1+z)},
\end{equation}
and
\begin{equation}\label{Sigma3+}
\Sigma^+_3 = \sum_{d=0}^{\infty}   I_{A,\{-z\}}(p^{d+M}) I_{B,\{-w\}}(p^{d})   p^{-d},
\end{equation}
respectively.

We next evaluate each of $\Sigma^+_1$ and $\Sigma^+_2$. We first consider $\Sigma^+_1$. Group together terms that have the same $k+d$ to write
\begin{align*}
\Sigma^+_1 =  \sum_{r=0}^{\infty} I_{B,\{-w\}}(p^{r}) p^{-r} \sum_{k+d=r}  I_{A,\{-z\}}(p^{d+M})p^{-kw}.	
\end{align*}
In this expression, we have $kw=rw-dw=rw-(d+M)w+Mw$. Thus
\begin{align*}
\Sigma^+_1 =  p^{-Mw}\sum_{r=0}^{\infty}  I_{B,\{-w\}}(p^{r}) p^{-r(1+w)} \sum_{k+d=r}  I_{A,\{-z\}}(p^{d+M}) p^{(d+M)w}.
\end{align*}
By (i) and (iii) of Lemma~\ref{IABlemma}, we have
\begin{align*}
\sum_{k+d=r}  I_{A,\{-z\}}(p^{d+M}) p^{(d+M)w}
& = \sum_{k+d=r}  I_{A_{-w},\{-z-w\}}(p^{d+M})\\
& =  I_{A_{-w}\cup \{0\},\{-z-w\}}(p^{r+M}) - I_{A_{-w}\cup \{0\},\{-z-w\}}(p^{M-1}) \\
& = p^{(r+M)w} I_{A\cup \{w\}, \{ -z\}}(p^{r+M}) - p^{(M-1)w} I_{A\cup \{w\}, \{ -z\}}(p^{M-1}).
\end{align*}
Hence
\begin{align*}
\Sigma^+_1 =  \sum_{r=0}^{\infty} I_{B,\{-w\}}(p^r) I_{A\cup \{w\}, \{ -z\}}(p^{r+M}) p^{-r} -p^{-w}I_{A\cup \{w\}, \{ -z\}}(p^{M-1})\sum_{r=0}^{\infty} I_{B,\{-w\}}(p^r) p^{-r(1+w)}.
\end{align*}
From this and Lemma~\ref{IABlemma}(iv), it follows that
\begin{align}
\Sigma^+_1 =  \sum_{r=0}^{\infty}
& I_{B,\{-w\}}(p^r) I_{A\cup \{w\}, \{ -z\}}(p^{r+M}) p^{-r} \notag\\
& -p^{-w} \left(1-\frac{1}{p}\right) I_{A\cup \{w\}, \{ -z\}}(p^{M-1}) \prod_{\beta\in B} \left(1-\frac{1}{p^{1+w+\beta}}\right)^{-1}. \label{Sigma+1}
\end{align}

Having evaluated $\Sigma_1^+$, we next turn our attention to the sum $\Sigma_2^+$ defined by \eqref{Sigma2+}. Gather terms with the same $j+d$ to write
\begin{equation*}
\Sigma_2^+ = \sum_{r=0}^{\infty} I_{A, \{ -z\}}(p^{r+M}) p^{-r} \sum_{j+d=r}  I_{B,\{-w\}}(p^{d})   p^{-jz }.
\end{equation*}
In this sum, we have $jz=rz-dz$. Thus
\begin{equation*}
\Sigma_2^+ = \sum_{r=0}^{\infty} I_{A, \{ -z\}}(p^{r+M}) p^{-r(1+z)} \sum_{j+d=r}  I_{B,\{-w\}}(p^{d})   p^{dz }.
\end{equation*}
From (i) and (iii) of Lemma~\ref{IABlemma}, we see that
\begin{align*}
\sum_{j+d=r} I_{B,\{-w\}}(p^{d})   p^{dz }
=\sum_{j+d=r} I_{B_{-z},\{-w-z\}}(p^{d}) 
=I_{B_{-z}\cup\{0\},\{-w-z\}}(p^{r}) =p^{rz} I_{B \cup\{z\},\{-w\}}(p^{r}).
\end{align*}
Thus
\begin{equation}\label{Sigma+2}
\Sigma_2^+ = \sum_{r=0}^{\infty} I_{A, \{ -z\}}(p^{r+M}) I_{B \cup\{z\},\{-w\}}(p^{r})p^{-r} .
\end{equation}

Combining now \eqref{Sigma+}, \eqref{Sigma3+}, \eqref{Sigma+1}, and \eqref{Sigma+2}, we arrive at
\begin{align*}
\Sigma^+ = & -p^{(M-1)w} \left(1-\frac{1}{p}\right) I_{A\cup \{w\}, \{ -z\}}(p^{M-1})\prod_{\beta\in B} \left(1-\frac{1}{p^{1+w+\beta}}\right)^{-1} \\
& + p^{Mw} \sum_{r=0}^{\infty} I_{A\cup \{w\}, \{ -z\}}(p^{r+M}) I_{B,\{-w\}}(p^{r}) p^{-r}  +  p^{Mw} \sum_{r=0}^{\infty} I_{A , \{ -z\}}(p^{r+M}) I_{B \cup\{z\},\{-w\}}(p^{r})p^{-r} \\
& - p^{Mw} \sum_{r=0}^{\infty} I_{A , \{ -z\}}(p^{r+M}) I_{B ,\{-w\}}(p^{r})p^{-r}.
\end{align*}
From this, \eqref{Sigma+-}, and \eqref{Sigma-}, we arrive at
\begin{align}
& \Sigma(A,B,z,w,M,0;p) = p^{Mw} \sum_{r=0}^{\infty}I_{A\cup \{w\}, \{ -z\}}(p^{r+M}) I_{B ,\{-w\}}(p^{r})p^{-r} \notag\\
&  +  p^{Mw} \sum_{r=0}^{\infty} I_{A , \{ -z\}}(p^{r+M})  I_{B\cup\{z\} ,\{-w\}}(p^{r})p^{-r}  - p^{Mw} \sum_{r=0}^{\infty} I_{A , \{ -z\}}(p^{r+M}) I_{B ,\{-w\}}(p^{r}) p^{-r}. \label{Sigma0}
\end{align}

To write this in a more concise form, we apply the trick on page~746 of \cite{CK3}, as follows. By Lemma~\ref{IABlemma}(ii), it holds that
\begin{align*}
& I_{A\cup \{w\}, \{ -z\}}(p^{r+M}) I_{B ,\{-w\}}(p^{r}) + I_{A , \{ -z\}}(p^{r+M}) I_{B\cup\{z\} ,\{-w\}}(p^{r}) - I_{A , \{ -z\}}(p^{r+M}) I_{B ,\{-w\}}(p^{r}) \\
& = (a_1+a_2) b_1 + a_1(b_1+b_2) -a_1b_1,
\end{align*}
where
\begin{align*}
a_1 & = I_{A , \{ -z\}}(p^{r+M}) \\
a_2 & = p^{-w} I_{A\cup \{w\}, \{ -z\}}(p^{r+M-1})\\
b_1 & = I_{B ,\{-w\}}(p^{r}) \\
b_2 & = p^{-z} I_{B\cup\{z\} ,\{-w\}}(p^{r-1}).
\end{align*}
Cancel the term $-a_1b_1$, then add and subtract $a_2b_2$ to deduce that
\begin{align*}
& I_{A\cup \{w\}, \{ -z\}}(p^{r+M})I_{B ,\{-w\}}(p^{r}) + I_{A , \{ -z\}}(p^{r+M})I_{B\cup\{z\} ,\{-w\}}(p^{r}) - I_{A , \{ -z\}}(p^{r+M}) I_{B ,\{-w\}}(p^{r}) \\
& = (a_1+a_2) b_1 + a_1b_2 +a_2b_2 -a_2b_2 \\
& = (a_1+a_2) (b_1 +b_2) -a_2b_2.
\end{align*}
From this, the definitions of $a_1,a_2,b_1,b_2$, and Lemma~\ref{IABlemma}(ii), we arrive at
\begin{align*}
&  I_{A\cup \{w\}, \{ -z\}}(p^{r+M})I_{B ,\{-w\}}(p^{r}) + I_{A , \{ -z\}}(p^{r+M})I_{B\cup\{z\} ,\{-w\}}(p^{r}) - I_{A , \{ -z\}}(p^{r+M}) I_{B ,\{-w\}}(p^{r}) \\
& =   I_{A\cup \{w\}, \{ -z\}}(p^{r+M}) I_{B\cup\{z\} ,\{-w\}}(p^{r}) - p^{-w-z} I_{A\cup \{w\}, \{ -z\}}(p^{r+M-1}) I_{B\cup\{z\} ,\{-w\}}(p^{r-1}).
\end{align*}
This and \eqref{Sigma0} imply
\begin{align*}
\Sigma(A,B,z,w,M,0;p) = & p^{Mw} \sum_{r=0}^{\infty}I_{A\cup \{w\}, \{ -z\}}(p^{r+M}) I_{B\cup\{z\} ,\{-w\}}(p^{r}) p^{-r} \\
&  - p^{Mw} p^{-w-z}\sum_{r=0}^{\infty}I_{A\cup \{w\}, \{ -z\}}(p^{r+M-1}) I_{B\cup\{z\} ,\{-w\}}(p^{r-1}) p^{-r}.
\end{align*}
The $r=0$ term of the last sum is zero by the convention mentioned in Lemma~\ref{IABlemma}(ii). Hence, we may relabel $r-1$ in this last sum as $r$ to deduce that
\begin{equation}\label{Sigma00}
\Sigma(A,B,z,w,M,0;p) = p^{Mw} \left(1-\frac{1}{p^{1+w+z}}\right) \sum_{r=0}^{\infty} I_{A\cup \{w\}, \{ -z\}}(p^{r+M}) I_{B\cup\{z\} ,\{-w\}}(p^{r}) p^{-r}.
\end{equation}
This proves Lemma~\ref{lemma2CKV} with the additional assumption that $N=0$. Now the definition \eqref{SigmaABdef} of $\Sigma$ implies that
\begin{equation*}
\Sigma(A,B,z,w,M,N;p) = \Sigma(B,A,w,z,N,M;p).
\end{equation*}
It follows from this and \eqref{Sigma00} that
\begin{align*}
\Sigma(A,B,z,w,0,N;p) 
& = \Sigma(B,A,w,z,N,0;p) \\
& = p^{Nz} \left(1-\frac{1}{p^{1+w+z}}\right) \sum_{r=0}^{\infty} I_{A\cup \{w\}, \{ -z\}}(p^{r}) I_{B\cup\{z\} ,\{-w\}}(p^{r+N}) p^{-r}
\end{align*}
This proves Lemma~\ref{lemma2CKV} with the additional assumption that $M=0$. Since $\min\{M,N\}=0$ means either $M=0$ or $N=0$, the proof of Lemma~\ref{lemma2CKV} is complete.
\end{proof}

In order to use Lemma~\ref{lemma2CKV} to evaluate the sum in \eqref{applydeltamethod4}, we need to relate the function $G$, defined by \eqref{bigGEdef}, with the function $I$, which is defined by \eqref{IABdef}.

\begin{lemma}\label{GtoI}
Let $p$ be a prime, and let $r$ be a nonnegative integer. Suppose that $E$ is a finite multiset of complex numbers and $s$ is a complex number such that $\re(s+\gamma)>0$ for every $\gamma\in E$. Then
\begin{equation}\label{eqn: GtoI}
G_E(s,p^r) = \frac{p}{p-1} \prod_{\gamma \in E } (1-p^{-s-\gamma}) \sum_{j=0}^{\infty} \frac{I_{E,\{1-s\}}(p^{j+r})}{p^{js}}.
\end{equation}
Furthermore, we have
\begin{equation}\label{eqn: GtoIbound}
G_E(s,p^r) \ll_{|E|,\varepsilon} p^{r(c+\varepsilon)}
\end{equation}
for arbitrarily small $\varepsilon$, where $c=-\min(\{1-\re(s)\}\cup \{\re(\gamma):\gamma\in E\})$.
\end{lemma}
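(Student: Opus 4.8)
The plan is to evaluate $G_E(s,p^r)$ directly from its definition \eqref{bigGEdef} and then recognise the resulting prime-power sum through the Dirichlet series \eqref{IABdef}. In \eqref{bigGEdef} with $q=p^r$, the divisors $d\mid p^r$ and $e\mid d$ are powers of $p$, and $\mu$ vanishes on $p^2,p^3,\dots$, so only the terms with $d,e\in\{1,p\}$ contribute. Working these out shows that for $r\ge 1$ the sum \eqref{bigGEdef} collapses to
\[
G_E(s,p^r)=\frac{p}{p-1}\,g_E(s,p^r)-\frac{p^{s}}{p-1}\,g_E(s,p^{r-1}),
\]
while for $r=0$ only $d=e=1$ survives, giving $G_E(s,1)=g_E(s,1)=1$.

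Next I would insert the definition \eqref{smallgedef}, which for a prime power reads $g_E(s,p^r)=\prod_{\gamma\in E}(1-p^{-s-\gamma})\sum_{m\ge 0}\tau_E(p^{m+r})p^{-ms}$, and pull out the common factor $\frac{p}{p-1}\prod_{\gamma\in E}(1-p^{-s-\gamma})$ from the two-term expression above for $G_E(s,p^r)$. This reduces the claim \eqref{eqn: GtoI} (for $r\ge 1$) to the identity
\[
\sum_{m\ge 0}\frac{\tau_E(p^{m+r})}{p^{ms}}-p^{s-1}\sum_{m\ge 0}\frac{\tau_E(p^{m+r-1})}{p^{ms}}=\sum_{m\ge 0}\frac{I_{E,\{1-s\}}(p^{m+r})}{p^{ms}},
\]
which follows termwise: expanding \eqref{IABdef2} with the singleton denominator multiset $\{1-s\}$ and using $\mu(p^b)=0$ for $b\ge 2$ gives $I_{E,\{1-s\}}(p^k)=\tau_E(p^k)-p^{s-1}\tau_E(p^{k-1})$, with the convention $\tau_E(p^{-1})=0$, so the coefficients of $p^{-ms}$ agree on both sides. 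The hypothesis $\re(s+\gamma)>0$ guarantees absolute convergence of all these series. For $r=0$ the claim \eqref{eqn: GtoI} reads $G_E(s,1)=\frac{p}{p-1}\prod_{\gamma\in E}(1-p^{-s-\gamma})\sum_{m\ge 0}I_{E,\{1-s\}}(p^m)p^{-ms}$, and $\sum_{m\ge 0}I_{E,\{1-s\}}(p^m)p^{-ms}=(1-1/p)\prod_{\gamma\in E}(1-p^{-s-\gamma})^{-1}$ by Lemma~\ref{IABlemma}(iv), so the right side equals $\frac{p}{p-1}(1-1/p)=1=G_E(s,1)$.

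For the bound \eqref{eqn: GtoIbound} I would argue from \eqref{eqn: GtoI}. The prefactor $\frac{p}{p-1}\prod_{\gamma\in E}|1-p^{-s-\gamma}|$ is $\ll_{|E|}1$ uniformly in $p$, since $\frac{p}{p-1}\le 2$ and each $|1-p^{-s-\gamma}|<2$ because $\re(s+\gamma)>0$. By the definition of $c$, every element of $E$ has real part $\ge -c$ and $\re(1-s)\ge -c$, so the divisor bound \eqref{divisorbound} applied to the multisets $E$ and $\{1-s\}$ gives $I_{E,\{1-s\}}(p^{m+r})\ll_{|E|,\varepsilon}p^{(m+r)(c+\varepsilon)}$. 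Thus the right side of \eqref{eqn: GtoI} is $\ll_{|E|,\varepsilon}p^{r(c+\varepsilon)}\sum_{m\ge 0}p^{m(c+\varepsilon-\re(s))}$, and the geometric series is $O(1)$ provided $\varepsilon$ is small enough that $c+\varepsilon<\re(s)$; this is possible because the hypothesis $\re(s+\gamma)>0$ forces $c<\re(s)$. This gives \eqref{eqn: GtoIbound}.

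The argument is essentially bookkeeping, so I do not expect a genuine obstacle. The points that need care are exploiting the vanishing of $\mu$ on higher prime powers correctly when collapsing \eqref{bigGEdef}, handling the harmless $r=0$ boundary case, and checking the convergence of the geometric series in the last step — which is precisely where one uses the full hypothesis $\re(s+\gamma)>0$ rather than merely the convergence of the left side of \eqref{eqn: GtoI}.
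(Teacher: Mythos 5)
Your proposal is correct and follows essentially the same route as the paper: collapse the definition \eqref{bigGEdef} at $q=p^r$ to $\frac{p}{p-1}g_E(s,p^r)-\frac{p^s}{p-1}g_E(s,p^{r-1})$, insert \eqref{smallgedef}, identify $\tau_E(p^k)-p^{s-1}\tau_E(p^{k-1})=I_{E,\{1-s\}}(p^k)$ via \eqref{IABdef2}, treat $r=0$ separately through the Euler product, and deduce the bound from \eqref{eqn: GtoI} and \eqref{divisorbound}. Your verification of the geometric-series convergence using $c<\re(s)$ is slightly more explicit than the paper's one-line justification of \eqref{eqn: GtoIbound}, but it is the same argument.
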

\begin{proof}
The assumption $\re(s+\gamma)>0$ and the bound \eqref{divisorbound} imply the absolute convergence of both the right-hand side of \eqref{eqn: GtoI} and the definition \eqref{smallgedef} of $g_E(s,p^r)$ (note that the analogue of \eqref{divisorbound} also holds for $\tau_E$ since $\tau_E=I_{E,\emptyset}$, where $\emptyset$ is the empty set). If $r=0$, then the definition \eqref{bigGEdef} implies $G_E(s,1)=1$, while the right-hand side of \eqref{eqn: GtoI} also equals $1$ because
$$
\sum_{j=0}^{\infty} \frac{I_{E,\{1-s\}}(p^{j+r})}{p^{js}} = \left( 1-\frac{1}{p}\right) \prod_{\gamma\in E}\left( 1- \frac{1}{p^{s+\gamma}}\right)^{-1}
$$
by the definition \eqref{IABdef} and the Euler product expression for zeta. Thus \eqref{eqn: GtoI} holds for $r=0$.

Now suppose that $r\geq 1$. The definition \eqref{bigGEdef} of $G_E(s,p^r)$ implies that
\begin{equation}\label{GEexpand}
G_E(s,p^r) = \left( \frac{p}{p-1}\right) g_E(s,p^r) - \left(\frac{p^s}{p-1}\right) g_E(s,p^{r-1}).
\end{equation}
The definition \eqref{smallgedef} implies that
\begin{equation*}
g_{E}(s,p^j) =  \Bigg\{ \prod_{\gamma\in E} \left( 1- \frac{1}{p^{s+\gamma}}\right)   \Bigg\} \sum_{m=0}^{\infty} \frac{\tau_E (p^{m+j}) }{p^{ms}} 
\end{equation*}
for $j\geq 1$. Note that this also holds for $j=0$ since the definition \eqref{taudef} and the Euler product expression for zeta imply that
$$
\sum_{m=0}^{\infty} \frac{\tau_E (p^{m}) }{p^{ms}} = \prod_{\gamma\in E}\left( 1- \frac{1}{p^{s+\gamma}}\right)^{-1}.
$$
It follows from these and \eqref{GEexpand} that
\begin{align}
G_E(s,p^r)
& = \left( \frac{p}{p-1}\right) \prod_{\gamma\in E} \left( 1-\frac{1}{p^{s+\gamma}}\right) \sum_{m=0}^{\infty}\frac{\tau_E(p^{m+r}) }{p^{ms}} \notag \\
& \hspace{.25in} - \left(\frac{p^s}{p-1}\right)\prod_{\alpha\in A} \left( 1-\frac{1}{p^{s+\gamma}}\right) \sum_{m=0}^{\infty}\frac{\tau_E(p^{m+r-1}) }{p^{ms}} \notag\\
& = \left( \frac{p}{p-1}\right) \prod_{\gamma\in E} \left( 1-\frac{1}{p^{s+\gamma}}\right) \sum_{m=0}^{\infty}\frac{\tau_E(p^{m+r})-p^{s-1}\tau_E(p^{m+r-1}) }{p^{ms}}. \label{GEexpand2}
\end{align}
Now \eqref{IABdef2}, which also holds for $\tau_E$ since $\tau_E=I_{E,\emptyset}$, implies
$$
\tau_E(p^{m+r})-p^{s-1}\tau_E(p^{m+r-1}) = I_{E,\{1-s\}}(p^{m+r}).
$$
The identity \eqref{eqn: GtoI} follows from this and \eqref{GEexpand2}.

The bound \eqref{eqn: GtoIbound} follows from \eqref{eqn: GtoI} and \eqref{divisorbound}.

\end{proof}

We now prove the main result of this section. Recall the notations $E_s:=\{\gamma+s: \gamma\in E\}$ and $E^-:=\{-\gamma:\gamma\in E\}$ that were stated below \eqref{recipe}.

\begin{theorem}\label{euler}
Suppose that $\ell$ is a positive integer, $\xi,\eta$ are complex numbers, and $A,B,Z,W$ are finite multisets of complex numbers with $Z=\{z_1,\dots,z_{\ell}\}$ and $W=\{w_1,\dots,w_{\ell}\}$. Let $A=A_1\cup \cdots \cup A_{\ell}$ and $B=B_1\cup \cdots \cup B_{\ell}$ be partitions of $A$ and $B$, respectively. Let $\epsilon>0$ be arbitrarily small, and suppose that $\re(\xi)=\re(\eta)=2\epsilon$ and $|\re(\gamma)|\leq \epsilon$ for all $\gamma\in A\cup B\cup Z\cup W$. Then
\begin{align*}
& \sum_{\substack{M_1+\cdots +M_{\ell} = N_1+\cdots +N_{\ell} \\ \min\{M_j,N_j\}=0 \ \forall j}} \prod_{j=1}^{\ell} \Bigg\{ p^{N_j(-\frac{1}{2}+\xi-z_j)+M_j(-\frac{1}{2}+\eta-w_j)} \\
& \hspace{.75in}\times \sum_{d=0}^{\infty}  \sum_{q=0}^1 (-1)^q p^{\min\{q+d,N_j\}(1+z_j)+\min\{q+d,M_j\}(1+w_j)-q(2+z_j+w_j)-d(1+\xi+\eta)} \\
& \hspace{1.25in} \times G_{A_j}\Big( 1+z_j, p^{q+d-\min\{q+d,N_j\}}\Big) G_{B_j}\Big( 1+w_j, p^{q+d-\min\{q+d,M_j\}}\Big)  \Bigg\}\\
& = \left( 1-\frac{1}{p}\right)^{-2\ell}\prod_{j=1}^{\ell} \Bigg\{\left(1-\frac{1}{p^{1+w_j+z_j-\xi-\eta}}\right) \prod_{\alpha \in A_j } \left(1-\frac{1}{p^{1+z_j+\alpha}}\right) \prod_{\beta \in B_j } \left(1-\frac{1}{p^{1+w_j+\beta}}\right) \Bigg\}   \\
& \hspace{1in} \times \sum_{m=0}^{\infty} \frac{I_{A_{\xi+\eta}\cup W,(Z^-)_{\xi+\eta}}(p^{m }) I_{B \cup  Z_{-\xi-\eta} , W^{-} }(p^{m})}{ p^{m}} .
\end{align*}
\end{theorem}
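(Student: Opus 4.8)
The plan is to reduce the multi-variable sum over $(M_1,N_1),\dots,(M_\ell,N_\ell)$ to a product of single-index sums by exploiting the multiplicativity already built into the statement, and then to recognize each single-index factor as a specialization of the quantity $\Sigma(A,B,z,w,M,N;p)$ from Lemma~\ref{lemma2CKV}. The first step is to substitute the expression for $G_{A_j}(1+z_j,p^{q+d-\min\{q+d,N_j\}})$ and $G_{B_j}(1+w_j,p^{q+d-\min\{q+d,M_j\}})$ given by Lemma~\ref{GtoI}, with $s=1+z_j$ (so $1-s=-z_j$) and $s=1+w_j$ (so $1-s=-w_j$). After pulling out the prefactors $\frac{p}{p-1}\prod_{\alpha\in A_j}(1-p^{-1-z_j-\alpha})$ and $\frac{p}{p-1}\prod_{\beta\in B_j}(1-p^{-1-w_j-\beta})$, each $G$ becomes a sum $\sum_{i\ge0} I_{A_j,\{-z_j\}}(p^{i+q+d-\min\{q+d,N_j\}})p^{-i(1+z_j)}$, and similarly for $B_j$. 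The point is that after this substitution, the inner $d,q$-sum over the $j$-th block — together with the factors $p^{N_j(-\frac12+\xi-z_j)+M_j(-\frac12+\eta-w_j)}$, $p^{\min\{q+d,N_j\}(1+z_j)+\min\{q+d,M_j\}(1+w_j)}$, and $p^{-d(1+\xi+\eta)}$ — is exactly (after relabelling the summation variables $i\mapsto j$-index and absorbing the extra $p^{-d(1+\xi+\eta)}$ in place of $p^{-d}$) the quantity $\Sigma(A_j,B_j,z_j+?,w_j+?,M_j,N_j;p)$ with suitably shifted spectral parameters.

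The key bookkeeping step is to identify precisely which shift makes this work. Comparing the exponent $-d(1+\xi+\eta)$ appearing here with the $-d$ appearing in \eqref{SigmaABdef}, I would substitute $z_j \rightsquigarrow z_j - \xi - \eta$ (or symmetrically split the $\xi+\eta$) so that the combination $p^{-d-j(1+z)}$-type terms in $\Sigma$ pick up the right power. Concretely, using Lemma~\ref{IABlemma}(i) to move the $\xi+\eta$ shift in and out of the $I$-subscripts, one checks that the $j$-th block equals
$$
\Big(\tfrac{p}{p-1}\Big)^2 \prod_{\alpha\in A_j}\!\big(1-p^{-1-z_j-\alpha}\big)\prod_{\beta\in B_j}\!\big(1-p^{-1-w_j-\beta}\big)\cdot \Sigma\big(A_j, B_j, z_j, w_j, M_j, N_j; p\big)
$$
up to the appropriate $\xi+\eta$-shifts on $A_j$, $w_j$, $z_j$; here the hypothesis $\min\{M_j,N_j\}=0$ is exactly the hypothesis needed to apply Lemma~\ref{lemma2CKV}, which then evaluates $\Sigma(A_j,B_j,\dots)$ as $p^{M_jw_j+N_jz_j}(1-p^{-1-w_j-z_j})\sum_{r\ge0}I_{A_j\cup\{w_j\},\{-z_j\}}(p^{r+M_j})I_{B_j\cup\{z_j\},\{-w_j\}}(p^{r+N_j})p^{-r}$ (again after the $\xi+\eta$-shift, which turns $1-p^{-1-w_j-z_j}$ into $1-p^{-1-w_j+z_j-\xi-\eta}$, matching the right-hand side). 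The factor $p^{M_jw_j+N_jz_j}$ combines with $p^{N_j(-\frac12+\xi-z_j)+M_j(-\frac12+\eta-w_j)}$ to give $p^{-\frac12 N_j-\frac12 M_j + N_j\xi + M_j\eta}$ — and since $M_1+\cdots+M_\ell=N_1+\cdots+N_\ell$ these combine across $j$ into a clean overall factor; here one also uses the constraint in a crucial way.

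Once each block is in this form, the final step is to carry out the sum over the admissible tuples $(M_1,N_1),\dots,(M_\ell,N_\ell)$ with $\sum M_j=\sum N_j$ and $\min\{M_j,N_j\}=0$. This is the combinatorial heart: the product $\prod_j \sum_{r_j\ge0}I_{A_j\cup\{w_j\},\{-z_j\}}(p^{r_j+M_j})I_{B_j\cup\{z_j\},\{-w_j\}}(p^{r_j+N_j})p^{-r_j}$, summed against the constraint, should collapse into the single sum $\sum_{m\ge0} I_{A_{\xi+\eta}\cup W,(Z^-)_{\xi+\eta}}(p^m)\,I_{B\cup Z_{-\xi-\eta},W^-}(p^m)\,p^{-m}$ on the right-hand side. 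The mechanism is that, writing out $I_{A_j\cup\{w_j\},\{-z_j\}}$ and $I_{B_j\cup\{z_j\},\{-w_j\}}$ via \eqref{IABdef2} as sums over factorizations of prime powers, the combined data of $(r_j, M_j, N_j)$ together with the internal factorization variables reassembles — because of the multiplicativity of $I$ over disjoint spectral sets and the balance condition $\sum M_j=\sum N_j$ — into a single factorization counted by $I_{\bigcup_j(A_j\cup\{w_j\}),\bigcup_j\{-z_j\}}(p^m)$ against $I_{\bigcup_j(B_j\cup\{z_j\}),\bigcup_j\{-w_j\}}(p^m)$ with $m=\sum_j r_j+\sum_j M_j=\sum_j r_j+\sum_j N_j$; recognizing $\bigcup_j A_j=A$, $\bigcup_j\{w_j\}=W$, $\bigcup_j\{z_j\}=Z$ and shifting by $\xi+\eta$ via Lemma~\ref{IABlemma}(i) gives the stated right-hand side. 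I expect \emph{this last combinatorial collapse} — matching the constrained multi-index sum with the single Dirichlet-series coefficient, and correctly tracking all the $\xi+\eta$-shifts so that the $(Z^-)_{\xi+\eta}$ and $Z_{-\xi-\eta}$ land on the correct sides — to be the main obstacle; the algebra in the earlier steps is routine given Lemmas~\ref{IABlemma}, \ref{lemma2CKV}, and \ref{GtoI}. A clean way to organize the collapse is to first do it for $\ell=1$ directly from Lemma~\ref{lemma2CKV}, and then induct on $\ell$, peeling off the $\ell$-th block and using that $I_{C\cup C',D\cup D'}(p^m)=\sum_{a+b=m}I_{C,D}(p^a)I_{C',D'}(p^b)$ for disjoint spectral data, which is immediate from \eqref{IABdef2}.
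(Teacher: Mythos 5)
Your proposal follows the paper's proof essentially step for step: apply Lemma~\ref{GtoI} to convert each $G$-factor into $I$-coefficients, recognize the resulting $d,q,i,k$-block as $\Sigma((A_j)_{\xi+\eta},B_j,z_j-\xi-\eta,w_j,M_j,N_j;p)$ and evaluate it with Lemma~\ref{lemma2CKV} (the hypothesis $\min\{M_j,N_j\}=0$ being exactly what that lemma needs), cancel the leftover powers of $p^{\eta}$ using $\sum_j M_j=\sum_j N_j$, and collapse the constrained sum over the $(M_j,N_j)$ via the Dirichlet-convolution structure of $I$ from \eqref{IABdef2} -- the paper does this last step by the substitution $m_j=M_j+r_j$, $n_j=N_j+r_j$ rather than your induction on $\ell$, but it is the same convolution identity. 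The only blemishes are the bookkeeping you yourself flagged: the precise shift is $z_j\mapsto z_j-\xi-\eta$ together with $A_j\mapsto (A_j)_{\xi+\eta}$ (leaving $w_j$ and $B_j$ untouched), so the local factor is $1-p^{-(1+w_j+z_j-\xi-\eta)}$ and the residual exponent per block is $(M_j-N_j)\eta$ rather than $N_j\xi+M_j\eta$, which is what makes the balance condition give a clean cancellation.
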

\begin{proof}
For brevity, let LHS denote the left-hand side of the conclusion of Theorem~\ref{euler}. We apply Lemma~\ref{GtoI} and deduce that
\begin{align*}
\text{LHS} = &  \sum_{\substack{M_1+\cdots +M_{\ell} = N_1+\cdots +N_{\ell} \\ \min\{M_j,N_j\}=0 \ \forall j}} \prod_{j=1}^{\ell} \Bigg\{ p^{N_j(-\frac{1}{2}+\xi-z_j)+M_j(-\frac{1}{2}+\eta-w_j)} \\
& \hspace{.5in} \times \left( 1-\frac{1}{p}\right)^{-2}\prod_{\alpha \in A_j } \left(1-\frac{1}{p^{1+z_j+\alpha}}\right) \prod_{\beta \in B_j } \left(1-\frac{1}{p^{1+w_j+\beta}}\right)\\
& \times \sum_{d=0}^{\infty} \sum_{q=0}^1 \sum_{i=0}^{\infty} \sum_{k=0}^{\infty} (-1)^q I_{A_j,\{-z_j\}}(p^{i+q+d-\min\{q+d,N_j\}})I_{B_j,\{-w_j\}}(p^{k+q+d-\min\{q+d,M_j\}}) \\
&  \hspace{.5in} \times p^{-i(1+z_j) -k(1+w_j) + \min\{q+d,N_j\}(1+z_j)+\min\{q+d,M_j\}(1+w_j)-q(2+z_j+w_j)-d(1+\xi+\eta)}  \Bigg\}  
\end{align*}
The $d,q,i,k$-sum equals $\Sigma((A_j)_{\xi+\eta},B_j,z_j-\xi-\eta,w_j,M_j,N_j;p)$ by \eqref{SigmaABdef} and Lemma~\ref{IABlemma}(i). Thus Lemma~\ref{lemma2CKV} gives
\begin{align*}
\text{LHS} = &   \sum_{\substack{M_1+\cdots +M_{\ell} = N_1+\cdots +N_{\ell} \\ \min\{M_j,N_j\}=0 \ \forall j}} \prod_{j=1}^{\ell} \Bigg\{ p^{N_j(-\frac{1}{2}-\eta  )+M_j(-\frac{1}{2}+\eta  )} \\
& \hspace{.5in} \times \left( 1-\frac{1}{p}\right)^{-2}\left(1-\frac{1}{p^{1+w_j+z_j-\xi-\eta}}\right)\prod_{\alpha \in A_j } \left(1-\frac{1}{p^{1+z_j+\alpha}}\right) \prod_{\beta \in B_j } \left(1-\frac{1}{p^{1+w_j+\beta}}\right)\\
& \hspace{1in} \times  \sum_{r=0}^{\infty} \frac{I_{(A_j)_{\xi+\eta}\cup \{w_j\},\{-z_j+\xi+\eta\}}(p^{r+M_j}) I_{B_j\cup \{z_j-\xi-\eta\},\{-w_j\}}(p^{r+N_j})}{ p^r}  \Bigg\}  
\end{align*}
We rearrange the factors and use the fact that $M_1+\cdots +M_{\ell} = N_1+\cdots +N_{\ell}$ to arrive at
\begin{align*}
\text{LHS} =  \left( 1-\frac{1}{p}\right)^{-2\ell}
& \prod_{j=1}^{\ell} \Bigg\{\left(1-\frac{1}{p^{1+w_j+z_j-\xi-\eta}}\right)  \prod_{\alpha \in A_j } \left(1-\frac{1}{p^{1+z_j+\alpha}}\right) \prod_{\beta \in B_j } \left(1-\frac{1}{p^{1+w_j+\beta}}\right) \Bigg\}    \\
& \times \sum_{\substack{M_1+\cdots +M_{\ell} = N_1+\cdots +N_{\ell} \\ \min\{M_j,N_j\}=0 \ \forall j}} \prod_{j=1}^{\ell} \Bigg\{ p^{ -\frac{1}{2}N_j    -\frac{1}{2} M_j } \\
& \hspace{.5in} \times  \sum_{r=0}^{\infty} \frac{I_{(A_j)_{\xi+\eta}\cup \{w_j\},\{-z_j+\xi+\eta\}}(p^{r+M_j}) I_{B_j\cup \{z_j-\xi-\eta\},\{-w_j\}}(p^{r+N_j})}{ p^r}  \Bigg\}.
\end{align*}
To evaluate the sum over the $M_j,N_j$, we interchange the order of summation to write it as
\begin{align*}
\sum_{r_1=0}^{\infty}\cdots\sum_{r_{\ell}=0}^{\infty}\sum_{\substack{M_1+\cdots +M_{\ell} = N_1+\cdots +N_{\ell} \\ \min\{M_j,N_j\}=0 \ \forall j}} \prod_{j=1}^{\ell}\Bigg\{    \frac{I_{(A_j)_{\xi+\eta}\cup \{w_j\},\{-z_j+\xi+\eta\}}(p^{r_j+M_j}) I_{B_j\cup \{z_j-\xi-\eta\},\{-w_j\}}(p^{r_j+N_j})}{ p^{r_j+\frac{1}{2}N_j +\frac{1}{2}M_j }}  \Bigg\} .
\end{align*}
We make the change of variables $m_j=M_j+r_j $ and $n_j =N_j+r_j$, and then evaluate the $r_j$-sums to see that this sum equals 
\begin{align*}
& \sum_{r_1=0}^{\infty}\cdots\sum_{r_{\ell}=0}^{\infty}\sum_{\substack{m_1+\cdots +m_{\ell} = n_1+\cdots +n_{\ell} \\ \min\{m_j,n_j\}=r_j \ \forall j}} \prod_{j=1}^{\ell}\Bigg\{     \frac{I_{(A_j)_{\xi+\eta}\cup \{w_j\},\{-z_j+\xi+\eta\}}(p^{m_j}) I_{B_j\cup \{z_j-\xi-\eta\},\{-w_j\}}(p^{n_j})}{ p^{\frac{1}{2}n_j +\frac{1}{2}m_j }}  \Bigg\} \\
& = \sum_{ m_1+\cdots +m_{\ell} = n_1+\cdots +n_{\ell}  } \prod_{j=1}^{\ell}\Bigg\{     \frac{I_{(A_j)_{\xi+\eta}\cup \{w_j\},\{-z_j+\xi+\eta\}}(p^{m_j}) I_{B_j\cup \{z_j-\xi-\eta\},\{-w_j\}}(p^{n_j})}{ p^{\frac{1}{2}n_j +\frac{1}{2}m_j }}  \Bigg\} \\
& = \sum_{m=0}^{\infty} \frac{I_{A_{\xi+\eta}\cup W,(Z^-)_{\xi+\eta}}(p^{m }) I_{B \cup  Z_{-\xi-\eta} , W^{-} }(p^{m})}{ p^{m}} 
\end{align*}
because, by \eqref{IABdef}, $I_{A_{\xi+\eta}\cup W,(Z^-)_{\xi+\eta}}$ is the Dirichlet convolution
$$
I_{A_{\xi+\eta}\cup W,(Z^-)_{\xi+\eta}} = I_{(A_1)_{\xi+\eta}\cup \{w_1\},\{-z_1+\xi+\eta\}} * \cdots * I_{(A_{\ell})_{\xi+\eta}\cup \{w_{\ell}\},\{-z_{\ell}+\xi+\eta\}} ,
$$
and similarly for $I_{B \cup  Z_{-\xi-\eta} , W^{-} }$.
\end{proof}

\section{Vandermonde integral expressions}\label{sec: vandermondeintegral}
We now return to the task of evaluating \eqref{applydeltamethod4}. By multiplicativity, we may formally factor the sum on the right-hand side of \eqref{applydeltamethod4} and write the Euler product formula
\begin{align}
& \sum_{\substack{M_1\cdots M_{\ell} = N_1\cdots N_{\ell} \\ (M_j,N_j)=1 \ \forall j}} \prod_{j=1}^{\ell} \Bigg\{ N_j^{-\frac{1}{2}+\xi-z_j}M_j^{-\frac{1}{2}+\eta-w_j}    \notag\\
& \hspace{.75in} \times \sum_{d=1}^{\infty} \frac{1}{d^{1+\xi+\eta}} \sum_{q=1}^{\infty} \frac{\mu (q) (qd,N_j)^{1+z_j} (qd,M_j)^{1+w_j}}{q^{2+z_j+w_j}} \notag\\
& \hspace{1in} \times G_{A_j}\left( 1+z_j,\frac{qd}{(qd,N_j)} \right) G_{B_j}\left( 1+w_j,\frac{qd}{(qd,M_j)} \right)  \Bigg\} \notag\\
& = \prod_p \Bigg( \sum_{\substack{M_1+\cdots +M_{\ell} = N_1+\cdots +N_{\ell} \\ \min\{M_j,N_j\}=0 \ \forall j}} \prod_{j=1}^{\ell} \Bigg\{ p^{N_j(-\frac{1}{2}+\xi-z_j )+M_j(-\frac{1}{2}+\eta-w_j )} \notag\\
& \hspace{.75in}\times \sum_{d=0}^{\infty}  \sum_{q=0}^1 (-1)^q p^{\min\{q+d,N_j\}(1+z_j)+\min\{q+d,M_j\}(1+w_j)-q(2+z_j+w_j)-d(1+\xi+\eta)} \notag\\
& \hspace{1in} \times G_{A_j}\Big( 1+z_j, p^{q+d-\min\{q+d,N_j\}}\Big) G_{B_j}\Big( 1+w_j, p^{q+d-\min\{q+d,M_j\}}\Big)  \Bigg\} \Bigg) . \label{sumaseulerproduct}
\end{align}
We may use Theorem~\ref{euler} to evaluate each local factor on the right-hand side of \eqref{sumaseulerproduct} and thus write the Euler product as
\begin{align*}
\prod_p \Bigg( \left( 1-\frac{1}{p}\right)^{-2\ell}\prod_{j=1}^{\ell} \Bigg\{\left(1-\frac{1}{p^{1+w_j+z_j-\xi-\eta}}\right) \prod_{\alpha \in A_j } \left(1-\frac{1}{p^{1+z_j+\alpha}}\right) \prod_{\beta \in B_j } \left(1-\frac{1}{p^{1+w_j+\beta}}\right) \Bigg\}   \\
\times \sum_{m=0}^{\infty} \frac{I_{A_{\xi+\eta}\cup W,(Z^-)_{\xi+\eta}}(p^{m }) I_{B \cup  Z_{-\xi-\eta} , W^{-} }(p^{m})}{ p^{m }} \Bigg).
\end{align*}
From this and \eqref{applydeltamethod4}, we arrive at the prediction
\begin{align}
\mathcal{S}_{\ell} \sim & \frac{1}{(\ell!)^2 (2\pi i )^2} \int_{(2\varepsilon)} \int_{(2\varepsilon)} \widetilde{\Upsilon}(\xi) \widetilde{\Upsilon}(\eta) \frac{X^{\xi + \eta}}{T} \int_0^{\infty} \psi\left( \frac{t}{T}\right) \frac{1}{(2\pi i)^{2\ell}} \oint_{|z_1|=\varepsilon} \cdots \oint_{|z_{\ell}|=\varepsilon}   \notag\\
& \times  \oint_{|w_1|=\varepsilon} \cdots \oint_{|w_{\ell}|=\varepsilon} \prod_{j=1}^{\ell} \Bigg\{  \chi (\tfrac{1}{2} +\xi-z_j+it) \chi (\tfrac{1}{2} +\eta-w_j-it)\notag\\
& \hspace{.75in} \times   \zeta(1+z_j+w_j-\xi-\eta) \prod_{\alpha\in A_j } \zeta(1+z_j+\alpha)  \prod_{\beta\in B_j } \zeta(1+w_j+\beta) \Bigg\}\notag\\
& \times  \prod_p \Bigg( \prod_{j=1}^{\ell} \Bigg\{\left(1-\frac{1}{p^{1+w_j+z_j-\xi-\eta}}\right) \prod_{\alpha \in A_j } \left(1-\frac{1}{p^{1+z_j+\alpha}}\right) \prod_{\beta \in B_j } \left(1-\frac{1}{p^{1+w_j+\beta}}\right) \Bigg\}   \notag\\
& \times \left( 1-\frac{1}{p}\right)^{-2\ell}\sum_{m=0}^{\infty} \frac{I_{A_{\xi+\eta}\cup W,(Z^-)_{\xi+\eta}}(p^{m }) I_{B \cup  Z_{-\xi-\eta} , W^{-} }(p^{m})}{ p^{m }} \Bigg) \,dw_{\ell}\cdots dw_1 \,dz_{\ell}\cdots dz_1\,d\xi\,d\eta. \notag
\end{align}
By the Euler product expression for zeta, we may write our prediction more concisely as
\begin{align}
\mathcal{S}_{\ell} \sim & \frac{1}{(\ell!)^2 (2\pi i )^2} \int_{(2\varepsilon)} \int_{(2\varepsilon)} \widetilde{\Upsilon}(\xi) \widetilde{\Upsilon}(\eta) \frac{X^{\xi + \eta}}{T} \int_0^{\infty} \psi\left( \frac{t}{T}\right) \frac{1}{(2\pi i)^{2\ell}} \oint_{|z_1|=\varepsilon} \cdots \oint_{|z_{\ell}|=\varepsilon}   \notag\\
& \hspace{.5in} \times \oint_{|w_1|=\varepsilon} \cdots \oint_{|w_{\ell}|=\varepsilon} \prod_{j=1}^{\ell} \Big\{  \chi (\tfrac{1}{2} +\xi-z_j+it) \chi (\tfrac{1}{2} +\eta-w_j-it)\Big\}\notag\\
& \hspace{.75in} \times \prod_p \Bigg\{\left( 1-\frac{1}{p}\right)^{-2\ell}\sum_{m=0}^{\infty} \frac{I_{A_{\xi+\eta}\cup W,(Z^-)_{\xi+\eta}}(p^{m }) I_{B \cup  Z_{-\xi-\eta} , W^{-} }(p^{m})}{ p^{m }} \Bigg\} \notag \\
& \hspace{1.15in} \times\,dw_{\ell}\cdots dw_1 \,dz_{\ell}\cdots dz_1\,d\xi\,d\eta, \label{applyeulerproductthm}
\end{align}
where the latter Euler product is to be interpreted as its analytic continuation.

To evaluate the $z_j$- and $w_j$-integrals, we need to write out this analytic continuation and determine its poles and residues. The local factor
\begin{equation*}
\left( 1-\frac{1}{p}\right)^{-2\ell}\sum_{m=0}^{\infty} \frac{I_{A_{\xi+\eta}\cup W,(Z^-)_{\xi+\eta}}(p^{m }) I_{B \cup  Z_{-\xi-\eta} , W^{-} }(p^{m})}{ p^{m}}
\end{equation*}
may be written as a power series in $1/p$. The coefficient of $1/p$ in this power series is
\begin{equation*}
2\ell + I_{A_{\xi+\eta}\cup W,(Z^-)_{\xi+\eta}}(p) I_{B \cup  Z_{-\xi-\eta} , W^{-} }(p),
\end{equation*}
which, by \eqref{IABdef2}, equals
\begin{align*}
2\ell +
& \Bigg(\sum_{\alpha\in A} p^{-\alpha-\xi-\eta} +\sum_{j=1}^{\ell} p^{-w_j} - \sum_{j=1}^{\ell} p^{z_j-\xi-\eta} \Bigg) \Bigg(\sum_{\beta\in B} p^{-\beta} + \sum_{j=1}^{\ell} p^{-z_j+\xi+\eta} -\sum_{j=1}^{\ell} p^{w_j}\Bigg) \\
& =  \sum_{\alpha\in A}\sum_{\beta\in B}p^{-\alpha-\beta-\xi-\eta} + \sum_{\alpha\in A} \sum_{j=1}^{\ell}   p^{-\alpha-z_j}  - \sum_{\alpha\in A}\sum_{j=1}^{\ell} p^{-\alpha-\xi-\eta+w_j}  \\
& \hspace{.25in} + \sum_{\beta\in B}\sum_{j=1}^{\ell} p^{-\beta-w_j} + \sum_{i=1}^{\ell}\sum_{j=1}^{\ell} p^{-z_i-w_j+\xi+\eta}  -\sum_{1\leq i\neq j\leq \ell} p^{w_i-w_j} \\
& \hspace{.25in}  - \sum_{\beta\in B}\sum_{j=1}^{\ell} p^{-\beta-\xi-\eta+z_j} -\sum_{1\leq i\neq j\leq {\ell}} p^{z_i-z_j} + \sum_{i=1}^{\ell}\sum_{j=1}^{\ell} p^{z_i+w_j-\xi-\eta}.
\end{align*}
Therefore the (analytic continuation of the) Euler product in \eqref{applyeulerproductthm} may be written as
\begin{align}
& \prod_{\substack{\alpha\in A \\ \beta\in B}} \zeta(1+\alpha+\beta+\xi+\eta)\prod_{\substack{1\leq j\leq \ell \\ \alpha\in A}}\zeta(1+\alpha+z_j) \prod_{\substack{1\leq j\leq \ell \\ \beta\in B}}\zeta(1+\beta+w_j) \notag\\
& \hspace{.25in} \times \prod_{\substack{1\leq j\leq \ell \\ \alpha\in A}} (1/\zeta)(1+ \alpha+\xi+\eta-w_j)\prod_{\substack{1\leq j\leq \ell \\ \beta\in B}} (1/\zeta) (1 +\beta+\xi+\eta-z_j)  \notag\\
& \hspace{.5in}  \times \prod_{\substack{1\leq i,j\leq \ell \\ i\neq j}} (1/\zeta) (1-z_i+z_j) \prod_{\substack{1\leq i,j\leq \ell \\ i\neq j}} (1/\zeta) (1-w_i+w_j)  \notag\\
& \hspace{.75in}  \times \prod_{\substack{1\leq i,j\leq \ell}}\zeta(1 +z_i+w_j-\xi-\eta)\zeta(1 -z_i-w_j+\xi+\eta) \notag\\
& \hspace{1in} \times \mathcal{A}(A,B,Z,W,\xi+\eta), \label{vandermondeintegrand}
\end{align}
where $\mathcal{A}(A,B,Z,W,\xi+\eta)$ is an Euler product that converges absolutely whenever $\re(\xi)=\re(\eta)=2\varepsilon$ and $|\re(\gamma)|\leq \varepsilon$ for all $\gamma\in A\cup B\cup Z\cup W$. Explicitly, $\mathcal{A}$ is defined by
\begin{align}
& \mathcal{A}(A,B,Z,W,\xi+\eta) \notag \\
& := \prod_p \Bigg\{\left( 1-\frac{1}{p}\right)^{-2\ell} \prod_{\substack{\alpha\in A \\ \beta\in B}} \left( 1- \frac{1}{p^{1+\alpha+\beta+\xi+\eta}} \right) \prod_{\substack{1\leq j\leq \ell \\ \alpha\in A}}\left( 1- \frac{1}{p^{1+\alpha+z_j}} \right)  \notag\\
& \hspace{.25in} \times \prod_{\substack{1\leq j\leq \ell \\ \beta\in B}} \left( 1- \frac{1}{p^{1+\beta+w_j}} \right) \prod_{\substack{1\leq j\leq \ell \\ \alpha\in A}} \left( 1- \frac{1}{p^{1+\alpha+\xi+\eta-w_j}} \right)^{-1} \notag \\
&  \hspace{.45in} \times \prod_{\substack{1\leq j\leq \ell \\ \beta\in B}} \left( 1- \frac{1}{p^{1+\beta+\xi+\eta-z_j}} \right)^{-1}\prod_{\substack{1\leq i,j\leq \ell \\ i\neq j}} \left( 1- \frac{1}{p^{1-z_i+z_j}} \right)^{-1}  \notag \\
& \hspace{.65in} \times  \prod_{\substack{1\leq i,j\leq \ell \\ i\neq j}} \left( 1- \frac{1}{p^{1-w_i+w_j}} \right)^{-1} \prod_{\substack{1\leq i,j\leq \ell}} \left( 1- \frac{1}{p^{1+z_i+w_j-\xi-\eta}} \right)  \notag\\
& \hspace{.85in} \times  \prod_{\substack{1\leq i,j\leq \ell}} \left( 1- \frac{1}{p^{1-z_i-w_j+\xi+\eta}} \right)\sum_{m=0}^{\infty} \frac{I_{A_{\xi+\eta}\cup W,(Z^-)_{\xi+\eta}}(p^{m }) I_{B \cup  Z_{-\xi-\eta} , W^{-} }(p^{m})}{ p^{m }} \Bigg\}, \label{mathcalAdef}
\end{align}
where we recall that $I_{C,D}$ is defined by \eqref{IABdef}. This leads us to conjecture Conjecture~\ref{conj: vandermonde}.

We now evaluate the $z_j$- and $w_j$- integrals in Conjecture~\ref{conj: vandermonde} (or equivalently in \eqref{applyeulerproductthm}). For convenience, we assume that the elements of $A$ are distinct from each other, and the elements of $B$ are distinct from each other. If $\re(\xi)=\re(\eta)=2\varepsilon$ and $|\alpha|,|\beta|\leq \varepsilon/2$ for all $\alpha\in A$ and $\beta\in B$, then the poles of the integrand displayed in Conjecture~\ref{conj: vandermonde}, viewed as a function of $z_j$ (resp.~$w_j$), that are enclosed by the circle $|z_j|=\varepsilon$ (resp.~$|w_j|=\varepsilon$) are at the points $z_j=-\alpha$, where $\alpha \in A$ (resp.~$w_j=-\beta$, where $\beta\in B$). Thus the value of the $z_1,\dots,z_{\ell},w_1,\dots,w_{\ell}$-integral in Conjecture~\ref{conj: vandermonde} equals the sum of the residues of the integrand at the points
\begin{equation}\label{poles}
(z_1,\dots,z_{\ell},w_1,\dots,w_{\ell}) = (-\alpha_1,\dots,-\alpha_{\ell},-\beta_1,\dots,-\beta_{\ell}),
\end{equation}
where $\alpha_1,\dots,\alpha_{\ell}\in A$ and $\beta_1,\dots,\beta_{\ell}\in B$. If $\alpha_i=\alpha_j$ for some $i\neq j$, then the residue is zero because of the presence of the factor $1/\zeta(1-z_i+z_j)$ in Conjecture~\ref{conj: vandermonde}. Hence the residue is nonzero only at the points \eqref{poles} such that $\{\alpha_1,\dots,\alpha_{\ell}\}$ is an $\ell$-element subset $U$, say, of $A$, and $\{\beta_1,\dots,\beta_{\ell}\}$ is an $\ell$-element subset $V$, say, of $B$. At such a point, the residue of \eqref{vandermondeintegrand} equals
\begin{align}
& \prod_{\substack{\alpha\in A \\ \beta\in B}} \zeta(1+\alpha+\beta+\xi+\eta)\prod_{\substack{ \alpha\in A,\hat{\alpha}\in U \\ \alpha\neq \hat{\alpha}}}\zeta(1+\alpha-\hat{\alpha}) \prod_{\substack{ \beta\in B,\hat{\beta}\in V \\ \beta\neq \hat{\beta}}}\zeta(1+\beta-\hat{\beta}) \notag\\
& \hspace{.25in} \times \prod_{\substack{\hat{\beta}\in V \\ \alpha\in A}} (1/\zeta)(1+ \alpha+\xi+\eta+\hat{\beta})\prod_{\substack{\hat{\alpha}\in U \\ \beta\in B}} (1/\zeta) (1 +\beta+\xi+\eta+\hat{\alpha})  \notag\\
& \hspace{.5in}  \times \prod_{\substack{\alpha,\hat{\alpha}\in U \\ \alpha\neq\hat{\alpha}}} (1/\zeta) (1+\alpha-\hat{\alpha}) \prod_{\substack{\beta,\hat{\beta}\in V \\ \beta\neq\hat{\beta}}} (1/\zeta) (1+\beta-\hat{\beta})  \notag\\
& \hspace{.75in}  \times \prod_{\substack{\hat{\alpha}\in U\\ \hat{\beta}\in V}}\zeta(1 -\hat{\alpha}-\hat{\beta}-\xi-\eta)\zeta(1 +\hat{\alpha}+\hat{\beta}+\xi+\eta) \notag\\
& \hspace{1in} \times \mathcal{A}(A,B,U,V,\xi+\eta), \label{residue}
\end{align}
with
\begin{align}
& \mathcal{A}(A,B,U,V,\xi+\eta) \notag \\
& = \prod_p \Bigg\{ \prod_{\substack{\alpha\in A \\ \beta\in B}} \left( 1- \frac{1}{p^{1+\alpha+\beta+\xi+\eta}} \right) \prod_{\substack{ \alpha\in A,\hat{\alpha}\in U \\ \alpha\neq \hat{\alpha}}} \left( 1- \frac{1}{p^{1+\alpha- \hat{\alpha} }} \right)  \notag\\
& \hspace{.25in} \times \prod_{\substack{ \beta\in B,\hat{\beta}\in V \\ \beta\neq \hat{\beta}}} \left( 1- \frac{1}{p^{1+\beta-\hat{\beta}}} \right) \prod_{\substack{\hat{\beta}\in V \\ \alpha\in A}} \left( 1- \frac{1}{p^{1+\alpha+\xi+\eta+ \hat{\beta} }} \right)^{-1} \notag \\
&  \hspace{.45in} \times \prod_{\substack{\hat{\alpha}\in U \\ \beta\in B}} \left( 1- \frac{1}{p^{1+\beta+\xi+\eta+\hat{\alpha}}} \right)^{-1}\prod_{\substack{\alpha,\hat{\alpha}\in U \\ \alpha\neq\hat{\alpha}}} \left( 1- \frac{1}{p^{1+\alpha -\hat{\alpha} }} \right)^{-1}  \notag \\
& \hspace{.65in} \times  \prod_{\substack{\beta,\hat{\beta}\in V \\ \beta\neq\hat{\beta}}} \left( 1- \frac{1}{p^{1+\beta-\hat{\beta}}} \right)^{-1} \prod_{\substack{\hat{\alpha}\in U\\ \hat{\beta}\in V}} \left( 1- \frac{1}{p^{1-\hat{\alpha} -\hat{\beta}-\xi-\eta}} \right)  \notag\\
& \hspace{.85in} \times  \prod_{\substack{\hat{\alpha}\in U\\ \hat{\beta}\in V}} \left( 1- \frac{1}{p^{1+\hat{\alpha}+\hat{\beta}+\xi+\eta}} \right)\sum_{m=0}^{\infty} \frac{ \tau_{(A \smallsetminus U)_{\xi+\eta}\cup V^- }(p^m) \tau_{ B\smallsetminus V \cup  (U_{\xi+\eta})^-}(p^m) }{ p^{m }} \Bigg\} \label{vandermondeintegrand2}
\end{align}
because
\begin{align*}
\left( 1-\frac{1}{p}\right)^{-2\ell} \prod_{\substack{ \alpha\in A,\hat{\alpha}\in U  }} \left( 1- \frac{1}{p^{1+\alpha- \hat{\alpha} }} \right) \prod_{\substack{ \beta\in B,\hat{\beta}\in V  }} \left( 1- \frac{1}{p^{1+\beta-\hat{\beta}}} \right) \\
=\prod_{\substack{ \alpha\in A,\hat{\alpha}\in U \\ \alpha\neq \hat{\alpha}}} \left( 1- \frac{1}{p^{1+\alpha- \hat{\alpha} }} \right) \prod_{\substack{ \beta\in B,\hat{\beta}\in V \\ \beta\neq \hat{\beta}}} \left( 1- \frac{1}{p^{1+\beta-\hat{\beta}}} \right)
\end{align*}
and
\begin{equation*}
I_{A_{\xi+\eta}\cup V^-,U_{\xi+\eta}}(p^{m }) I_{B \cup  (U^-)_{-\xi-\eta} , V }(p^{m}) = \tau_{(A \smallsetminus U)_{\xi+\eta}\cup V^- }(p^m) \tau_{ B\smallsetminus V \cup  (U_{\xi+\eta})^-}(p^m)
\end{equation*}
by \eqref{IABdef}. We may write this residue more concisely as
\begin{equation*}
\prod_p \Bigg\{ \sum_{m=0}^{\infty} \frac{ \tau_{(A \smallsetminus U)_{\xi+\eta}\cup V^- }(p^m) \tau_{ B\smallsetminus V \cup  (U_{\xi+\eta})^-}(p^m) }{ p^{m }} \Bigg\},
\end{equation*}
or, similarly,
\begin{equation*}
\sum_{n=1}^{\infty} \frac{ \tau_{(A \smallsetminus U)_{\xi+\eta}\cup V^- }(n) \tau_{ B\smallsetminus V \cup  (U_{\xi+\eta})^-}(n) }{ n },
\end{equation*}
which we interpret as its analytic continuation, as the reciprocals of the Euler product expressions for the zeta functions in \eqref{residue} are precisely those found in \eqref{vandermondeintegrand2}. Now for each pair $U,V$ of sets such that $U\subseteq A$ and $V\subseteq B$ with $|U|=|V|=\ell$, the number of points \eqref{poles} with $U=\{\alpha_1,\dots,\alpha_{\ell}\}$ and $V=\{\beta_1,\dots,\beta_{\ell}\}$ is $(\ell !)^2$. Thus, evaluating the $z_j$- and $w_j$-integrals in Conjecture~\ref{conj: vandermonde} leads to the prediction
\begin{align}
\mathcal{S}_{\ell} \sim
& \sum_{\substack{ U\subseteq A, V\subseteq B\\ |U|=|V|={\ell} }}\frac{1}{(2\pi i )^2} \int_{(2\varepsilon)} \int_{(2\varepsilon)} \widetilde{\Upsilon}(\xi) \widetilde{\Upsilon}(\eta) \frac{X^{\xi + \eta}}{T} \int_0^{\infty} \psi\left( \frac{t}{T}\right)  \notag\\
& \hspace{.25in} \times \prod_{\alpha\in U}   \chi (\tfrac{1}{2} +\xi+\alpha+it) \prod_{\beta\in V}\chi (\tfrac{1}{2} +\eta+\beta-it)  \notag\\
& \hspace{.5in}  \times \sum_{n=1}^{\infty} \frac{ \tau_{(A \smallsetminus U)_{\xi+\eta}\cup V^- }(n) \tau_{ B\smallsetminus V \cup  (U_{\xi+\eta})^-}(n) }{ n }\,d\xi\,d\eta. \notag
\end{align}
This is the same as Conjecture~\ref{conj: ell-swaps} because Lemma~\ref{IABlemma}(i) implies
\begin{equation*}
\tau_{(A \smallsetminus U)_{\xi+\eta}\cup V^- }(n) \tau_{ B\smallsetminus V \cup  (U_{\xi+\eta})^-}(n) =  \tau_{(A \smallsetminus U)_{\xi}\cup (V_{\eta})^- }(n)\tau_{ (B\smallsetminus V)_{\eta} \cup  (U_{\xi})^-}(n).
\end{equation*}

\end{document}